\newcommand{\s}{\mathop{\mathrm{S}}\nolimits}
\newtheorem{thm}{Theorem}[section]
\newtheorem{prop}[thm]{Proposition}
\newtheorem{lemma}[thm]{Lemma}
\newtheorem{cor}[thm]{Corollary}
\newtheorem{conj}[thm]{Conjecture}
\newtheorem{thm'}{Theorem}[subsection]
\newtheorem{prop'}[thm']{Proposition}
\newtheorem{lemma'}[thm']{Lemma}
\newtheorem{cor'}[thm']{Corollary}
\theoremstyle{definition}
\newtheorem{defi'}[thm']{Definition}
\theoremstyle{remark}
\newtheorem{prob}[thm]{Problem}
\newtheorem{rem}[thm]{Remark}
\newtheorem{rem'}[thm']{Remark}
\newtheorem{exam'}[thm']{Example}
\newtheorem{prob'}[thm']{Problem}
\newtheorem{quest'}[thm']{Question}
\numberwithin{equation}{section}
\author{Hideaki \=Oshima}
\address[H. \=Oshima]{Professor Emeritus, Ibaraki University, Mito, Ibaraki 310-8512, Japan}
\email{hideaki.ooshima.mito@vc.ibaraki.ac.jp}
\author{Katsumi \=Oshima}
\address[K. \=Oshima]{1-4001-5 Ishikawa, Mito, Ibaraki 310-0905, Japan}
\email{k-oshima@mbr.nifty.com}
\subjclass[2010]{Primary 55P99; Secondary 55Q05}
\keywords{Unstable higher Toda brackets}
\begin{document}
\allowdisplaybreaks
\title{A system of unstable higher Toda brackets}
\maketitle

\begin{abstract}
We show that a system of unstable higher Toda brackets can be defined inductively. 
\end{abstract}
 
\section{Introduction}
The Toda bracket is one of the basic tools in homotopy theory. 
After \cite{T1,T2} a number of definitions of higher Toda brackets have appeared in the literature. 
Nowadays theories of higher Toda brackets have become quite 
abstract and categorical (see for example \cite{BBG,BBS,BBS2,BJT} and references there), and 
few of them deal in {\it subscripted} brackets. 
We would like to study a topological and not so abstract theory of subscripted brackets.  
Even for topological higher Toda brackets, 
it seems difficult to nominate one of known theories as the standard one. 
In \cite{OO2,OO3}, we defined two systems of unstable higher Toda brackets 
as candidates for the standard system. 
Our definitions were basically just postulated and so one may feel that they are not 
really defined. 
The purpose of the present paper is to clear a little such a doubt 
by proving that one of two systems can be defined inductively. 

We will use notations of \cite{OO2,OO3} freely, while some of them shall be listed in Section~2. 

Given the data
\begin{equation}
\left\{\begin{array}{@{\hspace{0.2mm}}ll}
n,\ &\text{an integer $\ge 3$},\\
(X_{n+1},\dots,X_1),\ &\text{a sequence of well-pointed spaces},\\
\vec{\bm m}=(m_n,\dots,m_1),\ &\text{a sequence of non-negative integers},\\
\vec{\bm f}=(f_n,\dots,f_1),\ &\text{a sequence of pointed maps $f_k:\Sigma^{m_k}X_k\to X_{k+1}$},
\end{array}\right.
\end{equation}
we defined the unstable $n$-fold Toda bracket $\{\vec{\bm f}\,\}^{(\ddot{s}_t)}_{\vec{\bm m}}\subset 
[\Sigma^{n-2}\Sigma^{m_n}\cdots\Sigma^{m_1}X_1,X_{n+1}]$ in \cite{OO2,OO3} 
(see Section 2 below). 
By \cite[Theorem 7.1]{OO3}, $\{f_3,f_2,f_1\}^{(\ddot{s}_t)}_{(m_3,m_2,m_1)}$ is equal to the classical Toda bracket 
$\{f_3,f_2,\Sigma^{m_2}f_1\}_{m_3}$ which was denoted by $\{f_3,\Sigma^{m_3}f_2,\Sigma^{m_3}\Sigma^{m_2}f_1\}_{m_3}$ in \cite{T2}. 
Hence we have  
$$
\{f_3,f_2,f_1\}^{(\ddot{s}_t)}_{(m_3,m_2,m_1)}=\bigcup_{A_2,A_1}\{[f_3,A_2,\Sigma^{m_3}f_2],(\Sigma^{m_3}f_2,\widetilde{\Sigma}^{m_3}A_1,\Sigma^{m_3}\Sigma^{m_2}f_1)\}_{(0,0)},
$$
where the union $\bigcup_{A_2,A_1}$ is taken over all pairs $(A_2,A_1)$ of null homotopies 
$A_2:f_3\circ\Sigma^{m_3}f_2\simeq *$ and $A_1:f_2\circ\Sigma^{m_2}f_1\simeq *$, and $\{g,f\}_{(0,0)}$ is the one point set 
consisting of the homotopy class of $g\circ f$ for any pointed maps $Z\overset{g}{\leftarrow}Y\overset{f}{\leftarrow}X$.  
Moreover when there is a null homotopy $A:g\circ f\simeq *$, that is $A:CX=X\wedge I\to Z$ with $A(x\wedge 0)=g\circ f(x)$, 
the extension of $g$ along $f$, $[g,A,f]:Y\cup_f CX\to Z$, and the coextension of $f$ along $g$, $(g,A,f):\Sigma X\to Z\cup_g CY$, were defined in \cite{Og,OO2} 
(see Section 2 below and \cite{T2} for the original definitions), and, given a non negative integer $m$, 
the map $\widetilde{\Sigma}^mA:C\Sigma^mX\to \Sigma^mZ$ defined by $x\wedge s\wedge t\mapsto A(x\wedge t)\wedge s\ 
(x\in X,\ s\in\s^m,\ t\in I)$ is a null homotopy of $\Sigma^m g\circ\Sigma^m f$ (see Section 2).  

Our main result is

\begin{thm}
For $n\ge 4$, we have 
\begin{align*}
&\{f_n,\dots,f_1\}^{(\ddot{s}_t)}_{(m_n,\dots,m_1)}\\
&=\bigcup_{A_2,A_1}\{f_n,\dots,f_4,[f_3,A_2,\Sigma^{m_3}f_2],(\Sigma^{m_3}f_2,\widetilde{\Sigma}^{m_3}A_1,\Sigma^{m_3}\Sigma^{m_2}f_1)\}^{(\ddot{s}_t)}_{(m_n,\dots,m_4,0,0)}\\
&\hspace{3cm}\circ(1_{\Sigma^{m_3}\Sigma^{m_2}\Sigma^{m_1}X_1}\wedge\tau(\s^{m_4}\wedge\cdots\wedge\s^{m_n},\s^1)\wedge 1_{(\s^1)^{\wedge(n-3)}})\\
&=\bigcup_{\text{all }\vec{\bm A}}\{f_n,\dots,f_4,[f_3,A_2,\Sigma^{m_3}f_2],(\Sigma^{m_3}f_2,\widetilde{\Sigma}^{m_3}A_1,\Sigma^{m_3}\Sigma^{m_2}f_1)\}^{(\ddot{s}_t)}_{(m_n,\dots,m_4,0,0)}\\
&\hspace{3cm}\circ(1_{\Sigma^{m_3}\Sigma^{m_2}\Sigma^{m_1}X_1}\wedge\tau(\s^{m_4}\wedge\cdots\wedge\s^{m_n},\s^1)\wedge 1_{(\s^1)^{\wedge(n-3)}})\\
&=\bigcup_{\text{admissible }\vec{\bm A}}\{f_n,\dots,f_4,[f_3,A_2,\Sigma^{m_3}f_2],(\Sigma^{m_3}f_2,\widetilde{\Sigma}^{m_3}A_1,\Sigma^{m_3}\Sigma^{m_2}f_1)\}^{(\ddot{s}_t)}_{(m_n,\dots,m_4,0,0)}\\
&\hspace{3cm}\circ(1_{\Sigma^{m_3}\Sigma^{m_2}\Sigma^{m_1}X_1}\wedge\tau(\s^{m_4}\wedge\cdots\wedge\s^{m_n},\s^1)\wedge 1_{(\s^1)^{\wedge(n-3)}})
\end{align*}
where the union $\bigcup_{A_2,A_1}$ is taken over all pairs $(A_2,A_1)$ of null homotopies 
$A_2:f_3\circ\Sigma^{m_3}f_2\simeq *$ and $A_1:f_2\circ\Sigma^{m_2}f_1\simeq *$, 
the union $\bigcup_{\text{all }\vec{\bm A}}$ is taken over all sequences $\vec{\bm A}=(A_{n-1},\dots,A_1)$ of null homotopies $A_k:f_{k+1}\circ\Sigma^{m_{k+1}}f_k\simeq *$, 
and the union $\bigcup_{\text{admissible }\vec{\bm A}}$ is taken over all admissible sequences $\vec{\bm A}=(A_{n-1},\dots,A_1)$ of null homotopies $A_k:f_{k+1}\circ\Sigma^{m_{k+1}}f_k\simeq *$. 
Here ``admissible'' means that 
$$
[f_{k+2},A_{k+1},\Sigma^{m_{k+2}}f_{k+1}]\circ(\Sigma^{m_{k+2}}f_{k+1},\widetilde{\Sigma}^{m_{k+2}}A_k,\Sigma^{m_{k+2}}\Sigma^{m_{k+1}}f_k)\simeq *\ (1\le k\le n-2). 
$$
\end{thm}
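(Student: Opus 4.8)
The plan is to argue directly from the explicit definition of $\{\vec{\bm f}\,\}^{(\ddot{s}_t)}_{\vec{\bm m}}$ recalled in Section~2, rather than to induct on $n$; the three displayed equalities are precisely the assertion that the postulated bracket obeys a length-lowering recursion, so that together with the base case \cite[Theorem~7.1]{OO3} they exhibit the whole system as inductively defined. First I would unwind the definition of the left-hand side into a union, taken over systems of null homotopies and over the auxiliary deformation parameters encoded by the superscript $(\ddot{s}_t)$, of the homotopy classes of a single comparison map whose domain is the iterated mapping cone built from $\Sigma^{n-2}\Sigma^{m_n}\cdots\Sigma^{m_1}X_1$ and whose target is $X_{n+1}$. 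Well-pointedness of the $X_k$ is what makes every cofibre inclusion entering this domain a cofibration, so that the resulting homotopy class is insensitive to the choices that the bracket does not record.

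The heart of the matter is the first equality, which I would prove by reassembling the bottom two levels. Fix null homotopies $A_2\colon f_3\circ\Sigma^{m_3}f_2\simeq *$ and $A_1\colon f_2\circ\Sigma^{m_2}f_1\simeq *$. By the definitions recalled in the excerpt, the part of the domain governed by $f_3,f_2,f_1$ and by $A_2,A_1$ is exactly the place on which the extension $[f_3,A_2,\Sigma^{m_3}f_2]$ and the coextension $(\Sigma^{m_3}f_2,\widetilde{\Sigma}^{m_3}A_1,\Sigma^{m_3}\Sigma^{m_2}f_1)$ live and compose. I would show that contracting this part identifies the comparison map for the $n$-fold bracket with the comparison map for the $(n-1)$-fold bracket of the shortened sequence $(f_n,\dots,f_4,[f_3,A_2,\Sigma^{m_3}f_2],(\Sigma^{m_3}f_2,\widetilde{\Sigma}^{m_3}A_1,\Sigma^{m_3}\Sigma^{m_2}f_1))$ with subscript $(m_n,\dots,m_4,0,0)$. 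The two domains are canonically homeomorphic, but the $\s^1$ that the coextension contributes sits among the $n-2$ outermost suspension coordinates of the $n$-fold domain, whereas in the $(n-1)$-fold domain it has migrated inward, past the block $\s^{m_4}\wedge\cdots\wedge\s^{m_n}$. A direct count of suspension degrees confirms that the two domains have the same dimension and that the reindexing is the single transposition $1_{\Sigma^{m_3}\Sigma^{m_2}\Sigma^{m_1}X_1}\wedge\tau(\s^{m_4}\wedge\cdots\wedge\s^{m_n},\s^1)\wedge 1_{(\s^1)^{\wedge(n-3)}}$ displayed in the statement; precomposing with this transposition is therefore what turns the $(n-1)$-fold comparison map back into the $n$-fold one, and taking the union over all remaining data gives the first equality.

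The remaining two equalities are formal once the first is established, because the summand depends on $\vec{\bm A}$ only through the pair $(A_2,A_1)$ that builds the extension and coextension. Passing from $\bigcup_{A_2,A_1}$ to $\bigcup_{\text{all }\vec{\bm A}}$ only repeats each inner set once per choice of $A_3,\dots,A_{n-1}$; and a pair $(A_2,A_1)$ for which no such $A_3,\dots,A_{n-1}$ exist forces some $f_{k+1}\circ\Sigma^{m_{k+1}}f_k$ $(k\ge 3)$ to be essential, hence makes the inner $(n-1)$-fold bracket empty, so that enlarging the index set changes nothing. Passing to $\bigcup_{\text{admissible }\vec{\bm A}}$, I would note that the $k=1$ admissibility condition is exactly the nullity of $[f_3,A_2,\Sigma^{m_3}f_2]\circ(\Sigma^{m_3}f_2,\widetilde{\Sigma}^{m_3}A_1,\Sigma^{m_3}\Sigma^{m_2}f_1)$, without which the inner bracket is again empty; the conditions for $2\le k\le n-2$ are the corresponding nullities one level up, and they can be met whenever the inner bracket is nonempty, since the recursive structure of that bracket already supplies a coherent tower of null homotopies of the original maps. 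Thus the non-admissible sequences contribute only the empty set and may be discarded.

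The step I expect to be the main obstacle is the coordinate bookkeeping inside the first equality: one must match, coordinate by coordinate, the $(\ddot{s}_t)$ presentation of the $n$-fold domain against that of the shortened $(n-1)$-fold domain, and verify that the accumulated shuffle of the $\s$-factors is exactly $\tau(\s^{m_4}\wedge\cdots\wedge\s^{m_n},\s^1)$ and not some other permutation, with the correct orientations. This is lengthy but deterministic; all of the conceptual content already appears in the $n=3$ identity recalled before the statement, which is the case in which the shortened sequence has length two and the comparison map is simply the composite of the extension and the coextension.
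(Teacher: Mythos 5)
Your outline of the first equality is, in spirit, the paper's own route: one relates $\ddot{s}_t$-presentations of $(f_n,\dots,f_1)$ to $\ddot{s}_t$-presentations of the shortened sequence and tracks the switch map $1\wedge\tau(\s^{m_{[n,4]}},\s^1)\wedge 1$. But be aware that this is not a ``canonical homeomorphism'' of domains followed by bookkeeping: the two iterated mapping cones are glued along different attaching maps ($g_{r,s}$ versus $g^*_{r,s}$), which agree only up to homotopies running through the chosen structures $\mathscr{A}_r$, so the paper must prove two separate containments, (3.1) and (3.2), each by an induction that \emph{constructs} a presentation of one sequence from a presentation of the other (Lemmas 3.1.1, 3.1.2, 3.2.1 and the diagrams of Appendix B), and it needs Lemma A.1(2) to absorb the homeomorphism $\psi^{m_3}_{f_2}$ relating $\Sigma^{m_3}(X_3\cup_{f_2}C\Sigma^{m_2}X_2)$ to $\Sigma^{m_3}X_3\cup_{\Sigma^{m_3}f_2}C\Sigma^{m_3}\Sigma^{m_2}X_2$. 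Your proposal compresses all of this into ``lengthy but deterministic'' bookkeeping; that is an underestimate, but not a wrong idea.

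The genuine gap is your treatment of the third equality. Your assertion that ``the non-admissible sequences contribute only the empty set'' is false: the inner bracket depends on $\vec{\bm A}$ only through $(A_2,A_1)$, so a non-admissible sequence obtained from an admissible one by spoiling $A_3$ contributes exactly the same nonempty set. What must actually be shown is the containment $\{\vec{\bm f}\,\}^{(\ddot{s}_t)}_{\vec{\bm m}}\subset\bigcup_{\text{admissible }\vec{\bm A}}(\cdots)$, and your justification for it --- that the admissibility conditions for $2\le k\le n-2$ ``can be met whenever the inner bracket is nonempty, since the recursive structure of that bracket already supplies a coherent tower of null homotopies'' --- assumes precisely what has to be proved. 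Admissibility of a sequence $(A_{n-1},\dots,A_1)$ couples $A_{k+1}$ to $A_k$ for every $k$, and deducing it from nonemptiness of the shortened bracket requires converting a presentation of the shortened sequence into a presentation of the full sequence (the content of \S 3.2) and then extracting admissible null homotopies from a full presentation (Proposition 2.2, whose proof rests on the computation that $\overline{f_{r+1}}^2\circ\widetilde{\Sigma}^{m_{r+1}}g_{r+1,2}\simeq *$ and equals $[f_{r+1},A_r,\Sigma^{m_{r+1}}f_r]\circ(\Sigma^{m_{r+1}}f_r,\widetilde{\Sigma}^{m_{r+1}}A_{r-1},\Sigma^{m_{[r+1,r]}}f_{r-1})$ up to homotopy). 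This chain --- shortened presentation $\Rightarrow$ full presentation $\Rightarrow$ admissible sequence realizing the same element --- is the main content of the paper's proof and is entirely missing from your argument; without it the equality with the union over admissible sequences is unproven. Your second equality (all $\vec{\bm A}$ versus pairs $(A_2,A_1)$) is fine, since a pair that extends to no full sequence forces some $f_{k+1}\circ\Sigma^{m_{k+1}}f_k$ with $k\ge 3$ to be essential and hence the inner bracket to be empty.
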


In the above descriptions, if $A_2$, $A_1$, or $\vec{\bm A}$ does not exist, 
then $\{\vec{\bm f}\,\}^{(\ddot{s}_t)}_{\vec{\bm m}}$ denotes the empty set. 
The above theorem expands \cite[Theorem 6.7.1]{OO2} and \cite[Theorem 8.1]{OO3} and it says that 
$\{\vec{\bm f}\,\}^{(\ddot{s}_t)}_{\vec{\bm m}}$ is the union of classical Toda brackets and that 
we can define a kind of brackets inductively even in the category of pointed spaces $\mathrm{TOP}^*$. 

We have two corollaries below which suggest that higher subscripted brackets are useful in computing homotopy groups of spheres. 

\begin{cor} If $n\ge 3$, $X_{n+1}=\s^{m+1}$, $X_k\ (n\ge k\ge 1)$ is a connected CW-complex with a vertex as the base point, and $m_n\ge 1$, then 
$$
H\{\vec{\bm f}\,\}^{(\ddot{s}_t)}_{\vec{\bm m}}\subset \{H(f_n),f_{n-1},\dots,f_1\}^{(\ddot{s}_t)}_{\vec{\bm m}},
$$
where $H:[\Sigma K,\s^{m+1}]\to[\Sigma K,\s^{2m+1}]$ is the generalized Hopf invariant \cite{J,T2} ($K$ is any connected CW-complex with a vertex as the base point).
\end{cor}
A result similar to the above corollary can be seen in \cite[Proposition 5]{M}. 
By \cite[Example 5.9]{OO1} and \cite{T2} we easily have 

\begin{cor}
Let $\mu_3$ be any element of $\{\eta_3,\nu',8\iota_5,\nu_5\}^{(\ddot{s}_t)}_{(1,1,0,0)}$. 
Then 
$$
\{\eta_3,\nu',8\iota_5,\nu_5\}^{(\ddot{s}_t)}_{(1,1,0,0)}=\mu_3+\mathbb{Z}_2\{\eta_3\circ\varepsilon_4\}\subset\pi_{12}(\s^3)=\mathbb{Z}_2\{\mu_3\}\oplus\mathbb{Z}_2\{\eta_3\circ\varepsilon_4\}.
$$
\end{cor}
In an other paper we will study more applications of subscripted brackets and fill (or correct) gaps (or errors) in circulating papers.

In Section 2, we review the definition of 
$\{\vec{\bm f}\,\}^{(\ddot{s}_t)}_{\vec{\bm m}}$. 
In Section 3, we prove Theorem 1.1. 
In Section 4, we prove Corollary 1.2. 
In Section 5, we give an inductive definition of $n$-fold brackets in $\mathrm{TOP}^*$. 
In Appendix~A, we give a proof of Lemma 3.1. 
In Appendix~B, we write diagrams which shall be needed in Section 3. 
In Appendix~C, we correct errors in \cite{OO1,OO2}. 

\section{Review of the definition of $\{\vec{\bm f}\,\}^{(\ddot{s}_t)}_{\vec{\bm m}}$}
Let $\mathrm{TOP}$ be the category of topological spaces (spaces for short) 
and continuous maps (maps for short), $\mathrm{TOP}^*$ the category of spaces 
with the base point (pointed spaces for short) and maps preserving the base point 
(pointed maps for short), and $\mathrm{TOP}^w$ the full subcategory of 
$\mathrm{TOP}^*$ of well-pointed spaces, that is, pointed spaces such that 
the inclusion of the base point 
to the pointed space is a cofibration in $\mathrm{TOP}$. 
 
Given a space $A$, let $\mathrm{TOP}^A$ be the category of spaces under $A$ 
and maps under $A$. 
That is, its objects consists of maps $j:A\to X$ and $\mathrm{TOP}^A(j,j')$, 
the set of morphisms from $j:A\to X$ to $j':A\to X'$ consists of maps 
$f:X\to X'$ such that $f\circ j=j'$. 
The following well-known result of Dold (\cite[(2.18)]{DKP}) is useful: if $j:A\to X$ and $j':A\to X'$ 
are cofibrations and $f\in\mathrm{TOP}^A(j,j')$ is a homotopy equivalence in 
$\mathrm{TOP}$, then $f$ is a homotopy equivalence in $\mathrm{TOP}^A$. 
In the last case, we denote by $f^{-1}\in\mathrm{TOP}^A(j',j)$ 
a homotopy inverse of $f$ in $\mathrm{TOP}^A$. 

We list up some notations in $\mathrm{TOP}^*$. 
The base point of the pointed space $X$ is denoted by $x_0$ or $*$. 
Let $f:X\to Y$ be a pointed map. 
\begin{align*}
&I=[0,1],\ \text{the closed unit interval having $1$ as its base point};\\
&X_1\wedge\cdots\wedge X_n=(X_1\times\cdots\times X_n)/T(X_1,\dots,X_n),\\
&\hspace{1cm} \text{$T(X_1,\dots,X_n)$ is the subspace of $X_1\times\cdots\times X_n$ consisting of points such that}\\
&\hspace{1cm} \text{at least one component is the base point};\\
& CX=X\wedge I,\  \Sigma X=CX/(X\wedge\{0,1\});\\ 
&\s^m=\{(t_1,\dots,t_{m+1})\in\mathbf{R}^{m+1}\,|\,\sum t_k^2=1\},\\
&\hspace{1cm} \text{the $m$-sphere having $(1,0,\dots,0)$ as its base point},\\
&\hspace{1cm}\text{we identify $\s^m$ with $\underbrace{\s^1\wedge\cdots\wedge\s^1}_m=(\s^1)^{\wedge m}$ for $m\ge 1$ as in Section 2 of \cite{OO2}};\\
&1_X:X\to X,\ \text{the identity map of the space $X$};\\
&\Sigma^m X=X\wedge\s^m,\ \text{we identify $\Sigma X$ with $\Sigma^1X$};\\
&\Sigma^mf=f\wedge 1_{\s^m}:X\wedge\s^m\to Y\wedge\s^m;\\
&\s^{m_{[s+r,s]}}=\s^{m_s}\wedge\cdots\wedge\s^{m_{s+r}},\ 
\Sigma^{m_{[s+r,s]}}X=X\wedge\s^{m_{[s+r,s]}}, \Sigma^{m_{[s+r,s]}}f=f\wedge 1_{\s^{m_{[s+r,s]}}},\\
&\hspace{2cm} \text{for non-negative integers $r$ and $m_{s+r},\dots,m_s$};\\ 
&1_f:X\times I\to Y,\ \text{the constant homotopy $(x,t)\mapsto f(x)$};\\
&\widetilde{\Sigma}^mH:\Sigma^m X\times I\to \Sigma^m Y,\ (x\wedge s,t)\mapsto H(x,t)\wedge s,\ \text{for a homotopy $H:X\times I\to Y$};\\
&C_f=Y\cup_f CX, \ \text{the mapping cone of $f$};\\
&\psi^m_f:\Sigma^mY\cup_{\Sigma^mf}C\Sigma^mX
\underset{\approx}{\longrightarrow}\Sigma^m(Y\cup_f CX),\ y\wedge s\mapsto y\wedge s,\ x\wedge s\wedge t\mapsto x\wedge t\wedge s;\\
& i_f:Y\to Y\cup_f CX,\ \text{the inclusion};\\
& q_f:Y\cup_f CX\to (Y\cup_f CX)/Y=\Sigma X,\ \text{the quotient};\\
&q'_f:(Y\cup_f CX)\cup_{i_f}CY\underset{\simeq}{\longrightarrow} ((Y\cup_f CX)\cup_{i_f} CY)/CY=\Sigma X,\ \text{the quotient};\\
&\tau(X,Y):X\wedge Y\to Y\wedge X,\ \text{the switching map},\ x\wedge y\mapsto y\wedge x.
\end{align*}
Given a pointed map $g:Y\to Z$ with a null homotopy $A:CX\to Z$ of $g\circ f$, we define
\begin{gather*}
[g,A,f]:Y\cup_f CX\to Z,\ y\mapsto g(y),\ x\wedge t\mapsto A(x\wedge t);\\
(g,A,f):\Sigma X\to Z\cup_g CY,\ x\wedge t\mapsto \begin{cases} f(x)\wedge(1-2t) & 0\le t\le 1/2\\
A(x\wedge (2t-1)) & 1/2\le t\le 1\end{cases}.
\end{gather*}
Given a homotopy commutative square and a homotopy
\begin{equation}
\xymatrix{
X\ar[d]^-a \ar[r]^-f & Y\ar[d]^-b\\
X'\ar[r]^-{f'} & Y' 
}
,\quad 
H:b\circ f\simeq f'\circ a,
\end{equation}
we define $\Phi(f,f',a,b;H):Y\cup_f CX\to Y'\cup_{f'}CX'$ by  
$$
y\mapsto b(y),\quad x\wedge t\mapsto \begin{cases} H(x,2t) & 0\le t\le 1/2\\
a(x)\wedge(2t-1) & 1/2\le t\le 1\end{cases}.
$$ 

\begin{lemma}[\S2 of \cite{P}]
Given (2.1), consider the following diagram:
$$
\xymatrix{
X\ar[d]^-a \ar[r]^-f & Y\ar[d]^-b \ar[r]^-{i_f} & Y\cup_f CX \ar[d]^-{\Phi(H)} \ar[r]^-{i_{i_f}} & (Y\cup_f CX)\cup_{i_f} CY \ar[d]^-{\Phi'(H)} \ar[r]^-{q'_f} & \Sigma X\ar[d]^-{\Sigma a}\\
X'\ar[r]^-{f'} & Y' \ar[r]^-{i_{f'}} & Y'\cup_{f'}CX' \ar[r]^-{i_{i_{f'}}} & (Y'\cup_{f'}CX')\cup_{i_{f'}}CY' \ar[r]^-{q'_{f'}} & \Sigma X'
}
$$
where $\Phi(H)=\Phi(f,f',a,b;H)$ and 
$\Phi'(H)=\Phi(i_f,i_{f'},b,\Phi(H);1_{i_{f'}\circ b})$. 
Then we have
\begin{enumerate}
\item The second and the third squares are commutative; 
the first and the fourth squares are homotopy commutative.
\item $\Phi'(H)\simeq \Phi(H)\cup Cb$. 
\item $q'_{f'}\circ(\Phi(H)\cup Cb)\simeq \Sigma a\circ q'_f$. 
\item If $a$ and $b$ are homotopy equivalences, then $\Phi(H)$ is a homotopy equivalence. 
\item If the first square is strictly commutative and $H=1_{b\circ f}$, then $\Phi(H)\simeq b\cup Ca$. 
\end{enumerate}
\end{lemma}

From now on, we will work in $\mathrm{TOP}^w$ except Section 5. 

Suppose that (1.1) is given. 

In order to define the bracket $\{\vec{\bm f}\,\}^{(\ddot{s}_t)}_{\vec{\bm m}}$ we consider a collection $\{\mathscr{S}_r,\overline{f_r},\mathscr{A}_r\,|\,2\le r\le n\}$ which satisfies four conditions (i)--(iv): 
\begin{enumerate}
\item[(i)] $\mathscr{S}_r$ is a diagram displayed in 
$$
\footnotesize{
\xymatrix{
\Sigma^{m_{r-1}}X_{r-1}\ar[d]^-{f_{r-1}=g_{r,1}} &\cdots &\Sigma^{s-1}\Sigma^{m_{[r-1,r-s]}}X_{r-s} \ar[d]^-{g_{r,s}} &\cdots &\Sigma^{r-2}\Sigma^{m_{[r-1,1]}}X_1 \ar[d]^-{g_{r,r-1}}&\\
X_r=C_{r,1}\ar[r]_-{j_{r,1}} &\cdots \ar[r]_{j_{r,s-1}} & C_{r,s}\ar[r]_-{j_{r,s}} &\cdots\ar[r]_{j_{r,r-2}} & C_{r,r-1}\ar[r]_-{j_{r,r-1}} & C_{r,r}
}
}
$$
such that 
\begin{enumerate}
\item[(i.1)] it is {\it reduced}, that is, $C_{r,2}=X_r\cup_{f_{r-1}}C\Sigma^{m_{r-1}}X_{r-1}$ and $j_{r,1}=i_{f_{r-1}}$, 
\item[(i.2)] the pointed map $j_{r,s}:C_{r,s}\to C_{r,s+1}$ is a free cofibration, that is, $j_{r,s}$ is a cofibration in $\mathrm{TOP}$ and so $\Sigma^\ell j_{r,s}$ is also a free cofibration by \cite[Corollary 2.3(1)]{OO2} for any non-negative integer $\ell$ so that $\Sigma^\ell j_{r,s}$ is an embedding by Str{\o}m \cite[Theorem 1]{S},
\item[(i.3)] there is a homotopy equivalence in $\mathrm{TOP}^{C_{r,s}}(j_{r,s},i_{g_{r,s}})$ for $1\le s<r$.
\end{enumerate}
(Sometimes we write $\mathscr{S}_2=(\Sigma^{m_1}X_1;X_2, X_2\cup_{f_1}C\Sigma^{m_1}X_1;f_1;i_{f_1})$.) 
\item[(ii)] $\overline{f_r}:\begin{cases} \Sigma^{m_r}C_{r,r}\to X_{r+1} & 2\le r< n\\
\Sigma^{m_n}C_{n,n-1}\to X_{n+1} & r=n\end{cases}$ 
is an extension of $f_r$. 
Let $\overline{f_r}^s:\Sigma^{m_r}C_{r,s}\to X_{r+1}$ be the restriction of $\overline{f_r}$ to $\Sigma^{m_r}C_{r,s}$ for $1\le s\le r$ if $r< n$ and $1\le s< n$ if $r=n$ (so that 
$\overline{f_r}^1=f_r$). 
\item[(iii)] $\mathscr{A}_r=\{a_{r,s}\,|\,1\le s<r\}$ where $a_{r,s}\in \mathrm{TOP}^{C_{r,s}}(j_{r,s},i_{g_{r,s}})$ is a homotopy equivalence 
such that $\mathscr{A}_r$ is {\it reduced}, that is, $a_{r,1}=1_{C_{r,2}}$.  
It is called a {\it structure} on $\mathscr{S}_r$. 
\item[(iv)] $\mathscr{S}_{r+1}=(\widetilde{\Sigma}^{m_r}\mathscr{S}_r)(\overline{f_r},\widetilde{\Sigma}^{m_r}\mathscr{A}_r)\ (2\le r<n)$. 
\end{enumerate}
In the above list, (iv) is the only one which must be explained. 
In order to explain (iv), we need some terminology. 
The {\it quasi-structure} $\Omega(\mathscr{A}_r)$ derived from $\mathscr{A}_r$ on $\mathscr{S}_r$ consists of homotopy equivalences
$$
\omega_{r,s}=q'_{g_{r,s}}\circ(a_{r,s}\cup C1_{C_{r,s}}):C_{r,s+1}\cup_{j_{r,s}}CC_{r,s}\underset{\simeq}{\to}\Sigma\Sigma^{s-1}\Sigma^{m_{[r-1,r-s]}}X_{r-s}\ (1\le s< r).
$$
For $1\le s< r$, we set
\begin{align}
\widetilde{\Sigma}^{m_r}\omega_{r,s}&=(1_{\Sigma^{m_{[r-1,r-s]}}X_{r-s}}\wedge\tau(\s^{s-1}\wedge\s^1,\s^{m_r}))\circ\Sigma^{m_r}\omega_{r,s}\circ\psi^{m_r}_{j_{r,s}}\\
&\hspace{1cm}:\Sigma^{m_r}C_{r,s+1}\cup_{\Sigma^{m_r}j_{r,s}}C\Sigma^{m_r}C_{r,s}\underset{\simeq}{\longrightarrow}\Sigma\Sigma^{s-1}\Sigma^{m_{[r,r-s]}}X_{r-s},\nonumber\\
\widetilde{\Sigma}^{m_r}g_{r,s}&=\Sigma^{m_r}g_{r,s}\circ(1_{\Sigma^{m_{[r-1,r-s]}}X_{r-s}}\wedge\tau(\s^{m_r},\s^{s-1}))\\
&\hspace{1cm}:\Sigma^{s-1}\Sigma^{m_{[r,r-s]}}X_{r-s}\longrightarrow\Sigma^{m_r}C_{r,s},\nonumber\\
\widetilde{\Sigma}^{m_r}a_{r,s}&=(1_{\Sigma^{m_r}C_{r,s}}\cup C(1_{\Sigma^{m_{[r-1,r-s]}}X_{r-s}}\wedge\tau(\s^{s-1},\s^{m_r})))\circ(\psi^{m_r}_{g_{r,s}})^{-1}\circ\Sigma^{m_r}a_{r,s}\\
&\hspace{1cm}:\Sigma^{m_r}C_{r,s+1}\underset{\simeq}{\longrightarrow}\Sigma^{m_r}C_{r,s}\cup_{\widetilde{\Sigma}^{m_r}g_{r,s}}C\Sigma^{s-1}\Sigma^{m_{[r,r-s]}}X_{r-s}.\nonumber
\end{align}
Set $\widetilde{\Sigma}^{m_r}\mathscr{A}_r=\{\widetilde{\Sigma}^{m_r}a_{r,s}\,|\,1\le s< r\}$. 
We easily have
\begin{equation}
\widetilde{\Sigma}^{m_r}\omega_{r,s}=q'_{\widetilde{\Sigma}^{m_r}g_{r,s}}\circ(\widetilde{\Sigma}^{m_r}a_{r,s}\cup C1_{\Sigma^{m_r}C_{r,s}})\quad(1\le s< r).
\end{equation}
Let $\widetilde{\Sigma}^{m_r}\mathscr{S}_r$ be the following diagram:
$$
\tiny{
\xymatrix{
\Sigma^{m_{[r,r-1]}}X_{r-1}\ar[d]^-{\Sigma^{m_r}f_{r-1}=\widetilde{\Sigma}^{m_r}g_{r,1}} &\cdots & \Sigma^{s-1}\Sigma^{m_{[r,r-s]}}X_{r-s}\ar[d]^-{\widetilde{\Sigma}^{m_r}g_{r,s}} &\cdots &\Sigma^{r-2}\Sigma^{m_{[r,1]}}X_1 \ar[d]^-{\widetilde{\Sigma}^{m_r}g_{r,r-1}}&\\
\Sigma^{m_r}X_r=\Sigma^{m_r}C_{r,1}\ar[r]_-{\Sigma^{m_r}j_{r,1}} &\cdots\ar[r]&\Sigma^{m_r}C_{r,s}\ar[r]_-{\Sigma^{m_r}j_{r,s}}&\cdots\ar[r]&\Sigma^{m_r}C_{r,r-1}\ar[r]_-{\Sigma^{m_r}j_{r,r-1}}&\Sigma^{m_r}C_{r,r}
}
}
$$
We see that $\widetilde{\Sigma}^{m_r}a_{r,s}\in\mathrm{TOP}^{\Sigma^{m_r}C_{r,s}}(\Sigma^{m_r}j_{r,s},i_{\widetilde{\Sigma}^{m_r}g_{r,s}})$ is a homotopy equivalence in the category $\mathrm{TOP}^{\Sigma^{m_r}C_{r,s}}$. 
Let 
\begin{equation}
(\widetilde{\Sigma}^{m_r}a_{r,s})^{-1}\in\mathrm{TOP}^{\Sigma^{m_r}C_{r,s}}(i_{\widetilde{\Sigma}^{m_r}g_{r,s}},\Sigma^{m_r}j_{r,s})
\end{equation}
be a homotopy inverse of 
$\widetilde{\Sigma}^{m_r}a_{r,s}$. 
It follows from the proof of \cite[Lemma 4.3(1)]{OO2} that 
$
(\widetilde{\Sigma}^{m_r}a_{r,s})^{-1}\cup C1_{\Sigma^{m_r}C_{r,s}}
:(\Sigma^{m_r}C_{r,s}\cup_{\widetilde{\Sigma}^{m_r}}C\Sigma^{s-1}\Sigma^{m_{[r,r-s]}}X_{r-s})
\cup_{i_{\widetilde{\Sigma}^{m_r}g_{r,s}}}C\Sigma^{m_r}C_{r,s}\to\Sigma^{m_r}C_{r,s+1}\cup_{\Sigma^{m_r}j_{r,s}}C\Sigma^{m_r}C_{r,s}
$ is a homotopy inverse of $\widetilde{\Sigma}^{m_r}a_{r,s}\cup C1_{\Sigma^{m_r}C_{r,s}}$ so that
\begin{equation}
\left\{\begin{array}{@{\hspace{0.2mm}}ll}
&(\widetilde{\Sigma}^{m_r}\omega_{r,s})^{-1}\simeq((\widetilde{\Sigma}^{m_r}a_{r,s})^{-1}\cup C1_{\Sigma^{m_r}C_{r,s}})\circ(q'_{\widetilde{\Sigma}^{m_r}g_{r,s}})^{-1},\\
&\widetilde{\Sigma}^{m_r}\omega_{r,s}\circ ((\widetilde{\Sigma}^{m_r}a_{r,s})^{-1}\cup C1_{\Sigma^{m_r}C_{r,s}})\simeq q'_{\widetilde{\Sigma}^{m_r}g_{r,s}}
\end{array}\right.
\end{equation}
by (2.5), where $(\widetilde{\Sigma}^{m_r}\omega_{r,s})^{-1}$ is a homotopy inverse of $\widetilde{\Sigma}^{m_r}\omega_{r,s}$. 
We denote the following diagram for $2\le r<n$ by $(\widetilde{\Sigma}^{m_r}\mathscr{S}_r)(\overline{f_r},\widetilde{\Sigma}^{m_r}\mathscr{A}_r)$:
$$
\xymatrix{
\Sigma^{m_r}X_r\ar[d]^-{g'_{r+1,1}} &\cdots &\Sigma^s\Sigma^{m_{[r,r-s]}}X_{r-s} \ar[d]^-{g'_{r+1,s+1}}&\cdots&\Sigma^{r-1}\Sigma^{m_{[r,1]}}X_1\ar[d]^-{g'_{r+1,r}} &\\
C'_{r+1,1}\ar[r]^-{j'_{r+1,1}}&\cdots\ar[r]^-{j'_{r+1,s}} &C'_{r+1,s+1}\ar[r]^-{j'_{r+1,s+1}}&\cdots\ar[r]^-{j'_{r+1,r-1}}&C'_{r+1,r}\ar[r]^-{j'_{r+1,r}}&C'_{r+1,r+1}
}
$$ 
where
\begin{gather*}
C'_{r+1,1}=X_{r+1},\quad C'_{r+1,s+1}=X_{r+1}\cup_{\overline{f_r}^s}C\Sigma^{m_r}C_{r,s}\ (1\le s\le r),\\
j'_{r+1,1}=i_{f_r},\quad j'_{r+1,s+1}=1_{X_{r+1}}\cup C\Sigma^{m_r}j_{r,s}\ (1\le s< r),\\
g'_{r+1,1}=f_r,\quad g'_{r+1,s+1}=(\overline{f_r}^{s+1}\cup C1_{\Sigma^{m_r}C_{r,s}})\circ(\widetilde{\Sigma}^{m_r}\omega_{r,s})^{-1}\ (1\le s< r).
\end{gather*}
Now (iv) is interpreted as 
$$
C_{r+1,s}=C'_{r+1,s}\ (1\le s\le r+1),\quad 
j_{r+1,s}=j_{r+1,s}'\ (1\le s\le r),\quad g_{r+1,s}=g_{r+1,s}'\ (1\le s\le r).
$$ 
We must confirm that $(\widetilde{\Sigma}^{m_r}\mathscr{S}_r)(\overline{f_r},\widetilde{\Sigma}^{m_r}\mathscr{A}_r)$ has necessary properties. 
By \cite[Proposition~2.2, Corollary 2.3]{OO2} 
$C'_{r+1,s+1}$ is well-pointed and $j'_{r+1,s}$ is a free cofibration for $1\le s\le r$. 
It follows from \cite[Lemma 5.3]{OO2} (cf.\,\cite[Section 3]{OO3}) that 
$(\widetilde{\Sigma}^{m_r}\mathscr{S}_r)(\overline{f_r},\widetilde{\Sigma}^{m_r}\mathscr{A}_r)$ 
has a reduced structure, that is, there exist homotopy equivalences 
$a'_{r+1,s}\in\mathrm{TOP}^{C'_{r+1,s}}(j'_{r+1,s},i_{g'_{r+1,s}})$ for $1\le s\le r$ with $a'_{r+1,1}=1_{C'_{r+1,2}}$. 
This ends the explanation of (iv). 

We call a collection $\{\mathscr{S}_r,\overline{f_r},\mathscr{A}_r\,|\,2\le r\le n\}$ an $\ddot{s}_t$-{\it presentation} of $\vec{\bm f}$ if it satisfies (i)--(iv). 
We denote by $\{\vec{\bm f}\,\}^{(\ddot{s}_t)}_{\vec{\bm m}}$ the set of homotopy classes of 
$$
\overline{f_n}\circ\widetilde{\Sigma}^{m_n}g_{n,n-1}:\Sigma^{n-2}\Sigma^{m_{[n,1]}}X_1\to X_{n+1}
$$
for all $\ddot{s}_t$-presentations $\{\mathscr{S}_r,\overline{f_r},\mathscr{A}_r\,|\,2\le r\le n\}$ of $\vec{\bm f}$. 
As seen in \cite[Theorem 6.1]{OO3}, $\{\vec{\bm f}\}^{(\ddot{s}_t)}_{\vec{\bm m}}$ depends only on the homotopy classes of $f_k\ (1\le k\le n)$. 

Suppose that $n\ge 4$ and an $\ddot{s}_t$-presentation $\{\mathscr{S}_r,\overline{f_r},\mathscr{A}_r\,|\,2\le r\le n\}$ of $\vec{\bm f}$ is given. 
Since $\overline{f_{r+1}}^2\circ\psi^{m_{r+1}}_{f_r}:\Sigma^{m_{r+1}}X_{r+1}\cup_{\Sigma^{m_{r+1}}f_r}C\Sigma^{m_{[r+1,r]}}X_r\to X_{r+2}$ is 
an extension of $f_{r+1}$, we can take $A_r:f_{r+1}\circ\Sigma^{m_{r+1}}f_r\simeq *\ (1\le r<n)$ such that $\overline{f_{r+1}}^2\circ\psi^{m_{r+1}}_{f_r}=[f_{r+1},A_r,\Sigma^{m_{r+1}}f_r]$. 
Thus we have a sequence $(A_{n-1},\dots,A_1)$ of null homotopies $A_r:f_{r+1}\circ\Sigma^{m_{r+1}}f_r\simeq *$ such that 
$\overline{f_{r+1}}^2\circ\psi^{m_{r+1}}_{f_r}=[f_{r+1},A_r,\Sigma^{m_{r+1}}f_r]\ (1\le r< n)$ 
so that 
$g_{r+2,2}\simeq (f_{r+1},A_r,\Sigma^{m_{r+1}}f_r)\ (1\le r\le n-2)$ by \cite[(4.2)]{OO2}. 
By (2.6), for $2\le r<n$, 
$$
\overline{f_{r+1}}^3\circ(\widetilde{\Sigma}^{m_{r+1}}a_{r+1,2})^{-1}:\Sigma^{m_{r+1}}C_{r+1,2}\cup_{\widetilde{\Sigma}^{m_{r+1}}g_{r+1,2}}C\Sigma\Sigma^{m_{[r+1,r-1]}}X_{r-1}\to X_{r+2}
$$
is an extension of $\overline{f_{r+1}}^2$ so that $\overline{f_{r+1}}^2\circ\widetilde{\Sigma}^{m_{r+1}}g_{r+1,2}\simeq *$. 
We have
\begin{align*}
* &\simeq \overline{f_{r+1}}^2\circ\widetilde{\Sigma}^{m_{r+1}}g_{r+1,2}\\
&=\overline{f_{r+1}}^2\circ\psi^{m_{r+1}}_{f_r}\circ(\psi^{m_{r+1}}_{f_r})^{-1}\circ\Sigma^{m_{r+1}}g_{r+1,2}\circ(1_{\Sigma^{m_{[r,r-1]}}X_{r-1}}\wedge\tau(\s^{m_{r+1}},\s^1))\\
&\simeq [f_{r+1},A_r,\Sigma^{m_{r+1}}f_r]\circ (\psi^{m_{r+1}}_{f_r})^{-1}\circ\Sigma^{m_{r+1}}(f_r,A_{r-1},\Sigma^{m_r}f_{r-1})\\
&\hspace{4cm}\circ(1_{\Sigma^{m_{[r,r-1]}}X_{r-1}}\wedge\tau(\s^{m_{r+1}},\s^1))\\
&=[f_{r+1},A_r,\Sigma^{m_{r+1}}f_r]\circ (\Sigma^{m_{r+1}}f_r,\widetilde{\Sigma}^{m_{r+1}}A_{r-1},\Sigma^{m_{[r+1,r]}}f_{r-1})\ (\text{by \cite[Lemma 2.4]{OO1}}).
\end{align*}
Therefore $\vec{\bm A}=(A_{n-1},\dots,A_1)$ is an admissible sequence of null homotopies for $\vec{\bm f}$. 
Hence we have

\begin{prop}
If $n\ge 4$ and $\{\vec{\bm f}\}^{(\ddot{s}_t)}_{\vec{\bm m}}$ is not empty, then for any element $\alpha$ of $\{\vec{\bm f}\}^{(\ddot{s}_t)}_{\vec{\bm m}}$ there exist an $\ddot{s}_t$-presentation $\{\mathscr{S}_r,\overline{f_r},\mathscr{A}_r\,|\,2\le r\le n\}$ 
of $\vec{\bm f}$ and an admissible sequence $\vec{\bm A}=(A_{n-1},\dots,A_1)$ 
of null homotopies 
for $\vec{\bm f}=(f_n,\dots,f_1)$ such that $\alpha=\overline{f_n}\circ\widetilde{\Sigma}^{m_n}g_{n,n-1}$ and 
\begin{gather*}
\overline{f_r}^2\circ\psi^{m_r}_{f_{r-1}}=[f_r,A_{r-1},\Sigma^{m_r}f_{r-1}]\ (2\le r\le n),\\
g_{r+1,2}\simeq (f_r,A_{r-1},\Sigma^{m_r}f_{r-1})\ (2\le r< n).
\end{gather*}
\end{prop}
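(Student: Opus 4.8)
The plan is to extract the presentation guaranteed by the definition of the bracket and then run the construction carried out in the paragraphs preceding the statement. First, since $\alpha\in\{\vec{\bm f}\}^{(\ddot{s}_t)}_{\vec{\bm m}}$, the definition of the bracket furnishes an $\ddot{s}_t$-presentation $\{\mathscr{S}_r,\overline{f_r},\mathscr{A}_r\,|\,2\le r\le n\}$ of $\vec{\bm f}$ with $\alpha=\overline{f_n}\circ\widetilde{\Sigma}^{m_n}g_{n,n-1}$. This settles the asserted identity $\alpha=\overline{f_n}\circ\widetilde{\Sigma}^{m_n}g_{n,n-1}$ and fixes the presentation to be used throughout.

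Next I would produce the null homotopies one index at a time. For each $r$ with $2\le r\le n$, the composite $\overline{f_r}^2\circ\psi^{m_r}_{f_{r-1}}$ is defined on $\Sigma^{m_r}X_r\cup_{\Sigma^{m_r}f_{r-1}}C\Sigma^{m_{[r,r-1]}}X_{r-1}$ and restricts to $f_r$ on $\Sigma^{m_r}X_r$, hence is an extension of $f_r$ over the mapping cone of $\Sigma^{m_r}f_{r-1}$. Because every such extension has the form $[f_r,A,\Sigma^{m_r}f_{r-1}]$ for some null homotopy $A$ of $f_r\circ\Sigma^{m_r}f_{r-1}$, I may choose $A_{r-1}:f_r\circ\Sigma^{m_r}f_{r-1}\simeq *$ with $\overline{f_r}^2\circ\psi^{m_r}_{f_{r-1}}=[f_r,A_{r-1},\Sigma^{m_r}f_{r-1}]$, which is the first displayed identity. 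The second, $g_{r+1,2}\simeq(f_r,A_{r-1},\Sigma^{m_r}f_{r-1})$ for $2\le r\le n-1$, then follows from \cite[(4.2)]{OO2}, which identifies the derived map $g_{r+1,2}$ of the presentation with the coextension determined by the chosen null homotopy.

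It remains to check that $\vec{\bm A}=(A_{n-1},\dots,A_1)$ is admissible, i.e. that $[f_{r+1},A_r,\Sigma^{m_{r+1}}f_r]\circ(\Sigma^{m_{r+1}}f_r,\widetilde{\Sigma}^{m_{r+1}}A_{r-1},\Sigma^{m_{[r+1,r]}}f_{r-1})\simeq *$ for $2\le r<n$ (equivalently for $1\le k\le n-2$ after setting $k=r-1$). The key input is (2.5): the map $\overline{f_{r+1}}^3\circ(\widetilde{\Sigma}^{m_{r+1}}a_{r+1,2})^{-1}$ extends $\overline{f_{r+1}}^2$ over the mapping cone of $\widetilde{\Sigma}^{m_{r+1}}g_{r+1,2}$, whence $\overline{f_{r+1}}^2\circ\widetilde{\Sigma}^{m_{r+1}}g_{r+1,2}\simeq *$. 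I would then rewrite this null-homotopic composite: inserting $\psi^{m_{r+1}}_{f_r}\circ(\psi^{m_{r+1}}_{f_r})^{-1}$, using the two displayed identities to replace $\overline{f_{r+1}}^2\circ\psi^{m_{r+1}}_{f_r}$ by $[f_{r+1},A_r,\Sigma^{m_{r+1}}f_r]$ and $g_{r+1,2}$ by its coextension, and invoking \cite[Lemma 2.4]{OO1} to pass the suspension through the coextension, turns $\overline{f_{r+1}}^2\circ\widetilde{\Sigma}^{m_{r+1}}g_{r+1,2}$ into exactly $[f_{r+1},A_r,\Sigma^{m_{r+1}}f_r]\circ(\Sigma^{m_{r+1}}f_r,\widetilde{\Sigma}^{m_{r+1}}A_{r-1},\Sigma^{m_{[r+1,r]}}f_{r-1})$. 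As the left-hand side is null-homotopic, admissibility follows and the proof is complete.

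The step I expect to be the main obstacle is this last rewriting. The difficulty is not conceptual but one of bookkeeping: one must track the interplay of the stabilization operators $\widetilde{\Sigma}^{m}$, the canonical homeomorphisms $\psi^{m}$, and the switching maps $\tau$ through the defining formulas (2.2)--(2.4) for $\widetilde{\Sigma}^{m_{r+1}}g_{r+1,2}$, and confirm that \cite[Lemma 2.4]{OO1} applies under precisely the coordinate conventions used to define the coextension $(\,\cdot\,,\widetilde{\Sigma}^{m}A,\,\cdot\,)$. Once these coordinate identifications are pinned down, each equality in the chain becomes a direct substitution, so the real work is verifying that no extraneous transposition of sphere coordinates is introduced along the way.
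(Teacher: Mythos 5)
Your proposal is correct and follows essentially the same route as the paper: extract the presentation from the definition of the bracket, choose each $A_{r-1}$ via the extension property of $\overline{f_r}^2\circ\psi^{m_r}_{f_{r-1}}$, identify $g_{r+1,2}$ with the coextension by \cite[(4.2)]{OO2}, deduce $\overline{f_{r+1}}^2\circ\widetilde{\Sigma}^{m_{r+1}}g_{r+1,2}\simeq *$ from (2.5), and conclude admissibility by the $\psi$-insertion rewriting together with \cite[Lemma 2.4]{OO1}. The bookkeeping step you flag as the main obstacle is carried out in the paper exactly as you describe, so there is no gap.
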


\section{Proof of Theorem 1.1}
By Lemma A.1(2) in Appendix A, we have

\begin{lemma}
\begin{align*}
&\{f_n,\dots,f_4,[f_3,A_2,\Sigma^{m_3}f_2],(\Sigma^{m_3}f_2,\widetilde{\Sigma}^{m_3}A_1,\Sigma^{m_{[3,2]}}f_1)\}^{(\ddot{s}_t)}_{(m_n,\dots,m_4,0,0)}\\
&=\{f_n,\dots,f_4,[f_3,A_2,\Sigma^{m_3}f_2]\circ(\psi^{m_3}_{f_2})^{-1},\psi^{m_3}_{f_2}\circ(\Sigma^{m_3}f_2,\widetilde{\Sigma}^{m_3}A_1,\Sigma^{m_{[3,2]}}f_1)\}^{(\ddot{s}_t)}_{(m_n,\dots,m_4,0,0)}.
\end{align*}
\end{lemma}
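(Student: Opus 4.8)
The plan is to reduce the asserted equality to the invariance of the $\ddot{s}_t$-bracket under inserting $(\psi^{m_3}_{f_2})^{-1}\circ\psi^{m_3}_{f_2}=1$ at the innermost junction. Abbreviate $g=[f_3,A_2,\Sigma^{m_3}f_2]$, $h=(\Sigma^{m_3}f_2,\widetilde{\Sigma}^{m_3}A_1,\Sigma^{m_{[3,2]}}f_1)$, and $\phi=\psi^{m_3}_{f_2}$. From the definition of $\psi^m_f$ in Section~2, $\phi$ is a homeomorphism, hence a homotopy equivalence in $\mathrm{TOP}^w$, and its domain $\Sigma^{m_3}X_3\cup_{\Sigma^{m_3}f_2}C\Sigma^{m_3}\Sigma^{m_2}X_2$ is simultaneously the codomain of the coextension $h$ and the domain of the extension $g$. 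Thus both sides are $(n-1)$-fold brackets with the same subscript $(m_n,\dots,m_4,0,0)$, and their sequences agree in every entry except the two innermost ones: $(\dots,g,h)$ on the left versus $(\dots,g\circ\phi^{-1},\phi\circ h)$ on the right.

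First I would record that, because the subscript at the innermost ($f_1$-)position equals $0$, the junction between the two modified entries carries no suspension, so $(g\circ\phi^{-1})\circ(\phi\circ h)=g\circ h$. Hence the substitution leaves the actual composite at that junction unchanged and merely replaces the intermediate space $\mathrm{dom}(g)$ by its homeomorphic image $\Sigma^{m_3}(X_3\cup_{f_2}C\Sigma^{m_2}X_2)$ under $\phi$. The whole content of the lemma is therefore that the $\ddot{s}_t$-bracket is insensitive to such a change of intermediate space by a homeomorphism at a zero-subscript junction.

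To establish that insensitivity I would set up a bijection between $\ddot{s}_t$-presentations of the two sequences. Recalling the definition in Section~2, a presentation is an inductive tower of reduced mapping-cone diagrams $\mathscr{S}_r$ with structures $\mathscr{A}_r$ and extensions $\overline{f_r}$; the homeomorphism $\phi$ at the innermost cone propagates up the tower, inducing homeomorphisms of the cones $C_{r,s}$ and transporting the $\mathscr{A}_r$ and $\overline{f_r}$ compatibly with the identifications $\psi^{m_r}$ and with the switching conventions that define $\widetilde{\Sigma}^{m_r}\omega_{r,s}$, $\widetilde{\Sigma}^{m_r}g_{r,s}$, $\widetilde{\Sigma}^{m_r}a_{r,s}$ in (2.1)-(2.5), as well as with the relations of Proposition~2.2. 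Since $\phi$ is invertible this assignment is a bijection, and since $\phi^{-1}\circ\phi=1$ it carries a presentation to one with the same terminal map $\overline{f_n}\circ\widetilde{\Sigma}^{m_n}g_{n,n-1}$ up to homotopy; the two sets of homotopy classes therefore coincide.

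The step I expect to be the main obstacle is verifying that the single homeomorphism $\phi$ threads correctly through all the suspension-switching maps $\tau(\cdot,\cdot)$ and the $\psi^{m_r}$-identifications of (2.1)-(2.5), so that a valid presentation is sent to a valid presentation with an unchanged terminal map. This bookkeeping is precisely what the modification of \cite[(4.3)]{OO3} is designed to package, and it is recorded in Appendix~A as Lemma~A.1(2); once that lemma is available the equality is immediate.
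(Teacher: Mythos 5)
Your proposal is correct and takes essentially the same route as the paper: the paper's entire proof of this lemma is the one-line citation of Lemma A.1(2) in Appendix A (the modification of \cite[(4.3)]{OO3}), which, applied with the homotopy equivalence $\psi^{m_3}_{f_2}$ inserted at the zero-subscript innermost junction, gives the equality immediately, exactly as you conclude. Your intermediate sketch of a bijection between $\ddot{s}_t$-presentations is only heuristic (the paper instead derives A.1(2) from the inclusion A.1(1) plus homotopy invariance of the bracket), but since you ultimately rest the argument on Lemma A.1(2), your proof coincides with the paper's.
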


Thus, since $\bigcup_{\text{admissible }\vec{\bm A}}\subset\bigcup_{{\text all }\vec{\bm A}}\subset\bigcup_{A_2,A_1}$, 
it suffices to prove the following two containments for our purpose.
\begin{align}
\begin{split}
&\{f_n,\dots,f_1\}^{(\ddot{s}_t)}_{\vec{\bm m}}\\
&\quad\subset\bigcup_{{\text admissible }\vec{\bm A}}\{f_n,\dots,f_4,[f_3,A_2,\Sigma^{m_3}f_2]\circ(\psi^{m_3}_{f_2})^{-1},\\
&\hspace{2cm}\psi^{m_3}_{f_2}\circ(\Sigma^{m_3}f_2,\widetilde{\Sigma}^{m_3}A_1,\Sigma^{m_{[3,2]}}f_1)\}^{(\ddot{s}_t)}_{(m_n,\dots,m_4,0,0)}\\
&\hspace{2.5cm}\circ
(1_{\Sigma^{m_{[3,1]}}X_1}\wedge\tau(\s^{m_{[n,4]}},\s^1)\wedge 1_{(\s^1)^{\wedge(n-3)}}),  
\end{split}\\
\begin{split}
&\{f_n,\dots,f_1\}^{(\ddot{s}_t)}_{\vec{\bm m}}\\
&\quad\supset\bigcup_{A_2,A_1}\{f_n,\dots,f_4,[f_3,A_2,\Sigma^{m_3}f_2]\circ(\psi^{m_3}_{f_2})^{-1},\\
&\hspace{2cm}\psi^{m_3}_{f_2}\circ(\Sigma^{m_3}f_2,\widetilde{\Sigma}^{m_3}A_1,\Sigma^{m_{[3,2]}}f_1)\}^{(\ddot{s}_t)}_{(m_n,\dots,m_4,0,0)}\\
&\hspace{2.5cm}\circ(1_{\Sigma^{m_{[3,1]}}X_1}\wedge\tau(\s^{m_{[n,4]}},\s^1)\wedge 1_{(\s^1)^{\wedge(n-3)}}).  
\end{split}
\end{align}

This section consists of two subsections \S3.1 and \S3.2 in which we will prove (3.1) and (3.2), respectively. 

In \S3.1 and \S3.2, we will use the following notations:
\begin{gather*}
X^*_1=\Sigma\Sigma^{m_{[3,1]}}X_1,\quad X^*_2=\Sigma^{m_3}(X_3\cup_{f_2}C\Sigma^{m_2}X_2),\quad X^*_k=X_{k+1}\ (3\le k\le n),\\
m^*_1=m^*_2=0,\quad m^*_k=m_{k+1}\ (3\le k< n).
\end{gather*}

\numberwithin{equation}{subsection}

\subsection{Proof of (3.1)}
Let $n\ge 4$. 
It suffices to prove (3.1) when $\{\vec{\bm f}\,\}^{(\ddot{s}_t)}_{\vec{\bm m}}$ is not empty. 
Take $\alpha\in\{f_n,\dots,f_1\}^{(\ddot{s}_t)}_{\vec{\bm m}}$. 
By Proposition 2.2, there exist an $\ddot{s}_t$-presentation $\{\mathscr{S}_r,\overline{f_r},\mathscr{A}_r\,|\,2\le r\le n\}$ of $\vec{\bm f}$ 
and an admissible sequence $\vec{\bm A}=(A_{n-1},\dots,A_1)$ of null homotopies $A_k:f_{k+1}\circ\Sigma^{m_{k+1}}f_k\simeq *$ 
such that $\alpha=\overline{f_n}\circ\widetilde{\Sigma}^{m_n}g_{n,n-1}$ and
\begin{gather*}
\overline{f_r}^2\circ\psi^{m_r}_{f_{r-1}}=[f_r,A_{r-1},\Sigma^{m_r}f_{r-1}]\ (2\le r\le n),\\
g_{r+1,2}\simeq(f_r,A_{r-1},\Sigma^{m_r}f_{r-1})\ (2\le r< n).
\end{gather*}
Set
\begin{gather*}
f^*_1=\widetilde{\Sigma}^{m_3}g_{3,2}:X^*_1\to X^*_2,\quad f^*_2=\overline{f_3}^2:X^*_2\to X^*_3,\\
f^*_k=f_{k+1}:\Sigma^{m^*_k}X^*_k\to X^*_{k+1}\ (3\le k< n).
\end{gather*}
Then $f^*_2=[f_3,A_2,\Sigma^{m_3}f_2]\circ(\psi^{m_3}_{f_2})^{-1}$ and 
\begin{align*}
f^*_1&=\widetilde{\Sigma}^{m_3}g_{3,2}=\Sigma^{m_3}g_{3,2}\circ(1_{\Sigma^{m_{[2,1]}}X_1}\wedge\tau(\s^{m_3},\s^1))\\
&\simeq\Sigma^{m_3}(f_2,A_1,\Sigma^{m_2}f_1)\circ(1_{\Sigma^{m_{[2,1]}}X_1}\wedge\tau(\s^{m_3},\s^1))\\
&=\psi^{m_3}_{f_2}\circ(\Sigma^{m_3}f_2,\widetilde{\Sigma}^{m_3}A_1,\Sigma^{m_{[3,2]}}f_1)\quad(\text{by \cite[Lemma 2.4]{OO1}}).
\end{align*}
Hence by \cite[Theorem 6.1]{OO3} we have
\begin{align*}
&\{f^*_{n-1},\dots,f^*_3,f^*_2,f^*_1\}^{(\ddot{s}_t)}_{(m^*_{n-1},\dots,m^*_3,0,0)}\\
&=\{f_n,\dots,f_4,[f_3,A_2,\Sigma^{m_3}f_2]\circ(\psi^{m_3}_{f_2})^{-1},\psi^{m_3}_{f_2}\circ(\Sigma^{m_3}f_2,\widetilde{\Sigma}^{m_3}A_1,\Sigma^{m_{[3,2]}}f_1)\}^{(\ddot{s}_t)}_{(m_n,\dots,m_4,0,0)}.
\end{align*}
We will construct an $\ddot{s}_t$-presentation $\{\mathscr{S}^*_r,\overline{f^*_r},\mathscr{A}^*_r\,|\,2\le r< n\}$ of $\vec{\bm f^*}$ 
such that 
$\overline{f^*_{n-1}}=\overline{f_n}$ and 
$$
\widetilde{\Sigma}^{m_n}g_{n,n-1}
\simeq \widetilde{\Sigma}^{m_{n-1}^*}g^*_{n-1,n-2}\circ(1_{\Sigma^{m_{[3,1]}}X_1}\wedge\tau(\s^{m_{[n,4]}},\s^1)\wedge
 1_{(\s^1)^{\wedge(n-3)}}).
$$
Once we have such an $\ddot{s}_t$-presentation, then
$$
\overline{f_n}\circ\widetilde{\Sigma}^{m_n}g_{n,n-1}\simeq\overline{f^*_{n-1}}\circ\widetilde{\Sigma}^{m^*_{n-1}}g^*_{n-1,n-2}\circ(1_{\Sigma^{m_{[3,1]}}X_1}\wedge\tau(\s^{m_{[n,4]}},\s^1)\wedge
 1_{(\s^1)^{\wedge(n-3)}})
$$ 
so that we obtain (3.1). 

First we set $\mathscr{S}_2^*=(X^*_1;X^*_2,X^*_2\cup_{f^*_1}CX^*_1;f^*_1;i_{f^*_1})$ and 
$\mathscr{A}^*_2=\{1_{C^*_{2,2}}\}$. 
Then 
$$
C^*_{2,1}=\Sigma^{m_3}C_{3,2}, \quad C^*_{2,2}=\Sigma^{m_3}C_{3,2}\cup_{\widetilde{\Sigma}^{m_3}g_{3,2}}C\Sigma\Sigma^{m_{[3,1]}}X_1,
$$ 
$\widetilde{\Sigma}^{m_3}a_{3,2}:\Sigma^{m_3}C_{3,3}\to C^*_{2,2}$ is a homotopy equivalence in $\mathrm{TOP}^{\Sigma^{m_3}C_{3,2}}(\Sigma^{m_3}j_{3,2},i_{\widetilde{\Sigma}^{m_3}g_{3,2}})$, and 
\begin{equation}
\omega^*_{2,1}=q'_{\widetilde{\Sigma}^{m_3}g_{3,2}}.
\end{equation}
Set $\overline{f_2^*}=\overline{f_3}\circ(\widetilde{\Sigma}^{m_3}a_{3,2})^{-1}:C^*_{2,2}\to X^*_3$. 

Secondly set $\mathscr{S}^*_3=\mathscr{S}^*_2(\overline{f^*_2},\mathscr{A}^*_2)$. 
Then $g^*_{3,1}=f^*_2$ and
$$
C^*_{3,1}=C_{4,1},\quad C^*_{3,2}=C_{4,3},\quad C^*_{3,3}=X^*_3\cup_{\overline{f^*_2}}CC^*_{2,2}.
$$
Set 
$$
\overline{f^*_3}=\begin{cases} \overline{f_4} :\Sigma^{m^*_3}C^*_{3,2}\to X^*_4 & n=4\\
\overline{f_4}\circ\Sigma^{m^*_3}(1_{X_4}\cup C(\widetilde{\Sigma}^{m_3}a_{3,2})^{-1}) : \Sigma^{m^*_3}C^*_{3,3}\to X^*_4 & n\ge 5\end{cases}.
$$
By (2.7) and (3.1.1), we have 
$$
(\widetilde{\Sigma}^{m_3}\omega_{3,2})^{-1}\simeq((\widetilde{\Sigma}^{m_3}a_{3,2})^{-1}\cup C1_{\Sigma^{m_3}C_{3,2}})\circ(\omega^*_{2,1})^{-1}
$$
and so 
\begin{align*}
g_{4,3}&=(\overline{f_3}\cup C1_{\Sigma^{m_3}C_{3,2}})\circ(\widetilde{\Sigma}^{m_3}\omega_{3,2})^{-1}\\
&\simeq (\overline{f_3}\cup C1_{\Sigma^{m_3}C_{3,2}})\circ((\widetilde{\Sigma}^{m_3}a_{3,2})^{-1}\cup C1_{\Sigma^{m_3}C_{3,2}})\circ(\omega^*_{2,1})^{-1}\\
&=(\overline{f_2^*}\cup C1_{X^*_2})\circ(\omega^*_{2,1})^{-1}=g^*_{3,2}.
\end{align*}
Hence 
\begin{equation}
\widetilde{\Sigma}^{m_4}g_{4,3}\simeq\widetilde{\Sigma}^{m^*_3}g^*_{3,2}\circ(1_{\Sigma^{m_{[3,1]}}X_1}\wedge\tau(\s^{m_4},\s^1)\wedge 1_{\s^1}).
\end{equation}
Let $H^4:g_{4,3}\simeq g^*_{3,2}$ be an any homotopy and set
\begin{align}
\Phi(H^4)&=\Phi(g_{4,3},g^*_{3,2},1_{\Sigma^2\Sigma^{m_{[3,1]}}X_1},1_{C_{4,3}};H^4)\nonumber\\
&\hspace{1cm}:C_{4,3}\cup_{g_{4,3}}C\Sigma^2\Sigma^{m_{[3,1]}}X_1\to C^*_{3,2}\cup_{g^*_{3,2}}C\Sigma\Sigma^{m^*_{[2,1]}} X^*_1,\nonumber\\
a^*_{3,2}&=\Phi(H^4)\circ a_{4,3}\circ(1_{X_4}\cup C(\widetilde{\Sigma}^{m_3}a_{3,2})^{-1})
:C^*_{3,3}\to C^*_{3,2}\cup_{g^*_{3,2}}C\Sigma\Sigma^{m^*_{[2,1]}} X^*_1.
\end{align}
Set $\mathscr{A}^*_3=\{1_{C^*_{3,2}}, a^*_{3,2}\}$ which is a reduced structure on $\mathscr{S}^*_3$. 

When $n=4$, $\{\mathscr{S}^*_r,\overline{f^*_r},\mathscr{A}^*_r\,|\,r=2,3\}$ is an $\ddot{s}_t$-presentation of $(f^*_3,f^*_2,f^*_1)$ such that 
$$
\overline{f_4}\circ \widetilde{\Sigma}^{m_4}g_{4,3}\simeq\overline{f^*_3}\circ\widetilde{\Sigma}^{m^*_3}g^*_{3,2}\circ (1_{\Sigma^{m_{[3,1]}}X_1}\wedge\tau(\s^{m_4},\s^1)\wedge 1_{\s^1})
$$
by (3.1.2). 
Hence (3.1) holds for $n=4$. 

Thirdly let $n\ge 5$ and set $\mathscr{S}_4^*=(\widetilde{\Sigma}^{m^*_3}\mathscr{S}^*_3)(\overline{f^*_3},\widetilde{\Sigma}^{m^*_3}\mathscr{A}_3^*)$. 
Then 
$$
C^*_{4,s}=C_{5,s}\ (1\le s\le 2),\quad C^*_{4,3}=C_{5,4},\quad C^*_{4,4}=X^*_4\cup_{\overline{f^*_3}} C\Sigma^{m^*_3}C^*_{3,3}.
$$
Set 
\begin{align*}
\overline{f_4^*}&=\begin{cases} \overline{f_5} :\Sigma^{m^*_4}C^*_{4,3}=\Sigma^{m_5}C_{5,4}\to X_6=X^*_5 & n=5\\
\overline{f_5}\circ\Sigma^{m^*_4}(1_{X_5}\cup C\Sigma^{m^*_3}(1_{X_4}\cup C(\widetilde{\Sigma}^{m_3}a_{3,2})^{-1})):\Sigma^{m^*_4}C^*_{4,4}\to X_5^* & n\ge 6\end{cases},\\
\Phi(\widetilde{\Sigma}^{m_4} H^4)&=\Phi(\Sigma^{m_4}g_{4,3},\Sigma^{m^*_3}g^*_{3,2},1_{\Sigma^{m_4}\Sigma^2\Sigma^{m_{[3,1]}}X_1},1_{\Sigma^{m_4}C_{4,3}};\widetilde{\Sigma}^{m_4}H^4)\\
&:\Sigma^{m_4}C_{4,3}\cup_{\Sigma^{m_4}g_{4,3}}C\Sigma^{m_4}\Sigma^2\Sigma^{m_{[3,1]}}X_1\to \Sigma^{m^*_3}C^*_{3,2}\cup_{\Sigma^{m^*_3}g^*_{3,2}}C\Sigma^{m^*_3}\Sigma\Sigma^{m^*_{[2,1]}} X^*_1,\\
\Phi'(\widetilde{\Sigma}^{m_4} H^4)&=\Phi(i_{\Sigma^{m_4}g_{4,3}},i_{\Sigma^{m^*_3}g^*_{3,2}},1_{\Sigma^{m_4}C_{4,3}},\Phi(\widetilde{\Sigma}^{m_4}H^4);1_{i_{\Sigma^{m^*_3}g^*_{3,2}}})\\
&:(\Sigma^{m_4}C_{4,3}\cup_{\Sigma^{m_4}g_{4,3}} C\Sigma^{m_4}\Sigma\Sigma\Sigma^{m_{[3,1]}}X_1)\cup C\Sigma^{m_4}C_{4,3}\\
&\hspace{2cm}\to 
(\Sigma^{m^*_3}C^*_{3,2}\cup_{\Sigma^{m^*_3}g^*_{3,2}} C\Sigma^{m^*_3}\Sigma\Sigma^{m^*_{[2,1]}}X^*_1)\cup C\Sigma^{m^*_3}C^*_{3,2},\\
\tau_4&=1_{\Sigma^{m_{[3,1]}}X_1}\wedge\tau(\s^{m_4},(\s^1)^{\wedge 2}):\Sigma\Sigma\Sigma^{m_{[4,1]}}X_1\to\Sigma^{m_4}\Sigma\Sigma\Sigma^{m_{[3,1]}}X_1,\\
\tau'_4&=1_{\Sigma^{m_4}\Sigma^2\Sigma^{m_{[3,1]}}X_1}:\Sigma^{m_4}\Sigma^2\Sigma^{m_{[3,1]}}X_1\to\Sigma^{m^*_3}\Sigma\Sigma^{m^*_{[2,1]}}X^*_1=\Sigma^{m_4}\Sigma^2\Sigma^{m_{[3,1]}}X_1,\\
\tau''_4&=1_{\Sigma\Sigma^{m_{[3,1]}}X_1}\wedge\tau(\s^1,\s^{m_4}):\Sigma^{m_4}\Sigma\Sigma\Sigma^{m_{[3,1]}}X_1\to\Sigma\Sigma^{m_4}\Sigma\Sigma^{m_{[3,1]}}X_1.
\end{align*}
Then 
\begin{equation}
\tau''_4\circ\tau'_4\circ\tau_4=1_{\Sigma^{m_{[3,1]}}X_1}\wedge\tau(\s^{m_4},\s^1)\wedge 1_{\s^1}.
\end{equation} 
Here we consider \fbox{Diagram $D_4$} in Appendix B.1. 

\begin{lemma'}
$\widetilde{\Sigma}^{m^*_3}a^*_{3,2}=
(1_{\Sigma^{m^*_3}C^*_{3,2}}\cup C\tau''_4)\circ\Phi(\widetilde{\Sigma}^{m_4}H^4)\circ(1_{\Sigma^{m_4}C_{4,3}}\cup C\tau_4)\circ\widetilde{\Sigma}^{m_4}a_{4,3}\circ\Sigma^{m_4}(1_{X_4}\cup C(\widetilde{\Sigma}^{m_3}a_{3,2})^{-1})
$.
\end{lemma'}
\begin{proof}
We have
\begin{align*}
&\widetilde{\Sigma}^{m^*_3}a^*_{3,2}=(1\cup C\tau''_4)\circ(\psi^{m^*_3}_{g^*_{3,2}})^{-1}\circ\Sigma^{m^*_3}a^*_{3,2}\\
&=(1\cup C\tau''_4)\circ(\psi^{m^*_3}_{g^*_{3,2}})^{-1}\circ\Sigma^{m_4}\Phi(H^4)\circ\Sigma^{m_4}a_{4,3}\circ\Sigma^{m_4}(1_{X_4}\cup C(\widetilde{\Sigma}^{m_3}a_{3,2})^{-1})\ (\text{by (3.1.3)})\\
&=(1\cup C\tau''_4)\circ\Phi(\widetilde{\Sigma}^{m_4}H^4)\circ(\psi^{m_4}_{g_{4,3}})^{-1}\circ\Sigma^{m_4}a_{4,3}\circ\Sigma^{m_4}(1_{X_4}\cup C(\widetilde{\Sigma}^{m_3}a_{3,2})^{-1})\\
&=(1\cup C\tau''_4)\circ\Phi(\widetilde{\Sigma}^{m_4}H^4)\circ(1\cup C\tau_4)\\
&\hspace{1cm}\circ(1\cup C\tau_4)^{-1}\circ(\psi^{m_4}_{g_{4,3}})^{-1}\circ\Sigma^{m_4}a_{4,3}\circ\Sigma^{m_4}(1_{X_4}\cup C(\widetilde{\Sigma}^{m_3}a_{3,2})^{-1})\\
&=(1\cup C\tau''_4)\circ\Phi(\widetilde{\Sigma}^{m_4}H^4)\circ(1\cup C\tau_4)
\circ\widetilde{\Sigma}^{m_4}a_{4,3}\circ \Sigma^{m_4}(1_{X_4}\cup C(\widetilde{\Sigma}^{m_3}a_{3,2})^{-1}) \ (\text{by (2.4)}).
\end{align*}
This ends the proof.
\end{proof}

We have
\begin{align*}
&\Sigma(\tau''_4\circ\tau'_4\circ\tau_4)\circ\widetilde{\Sigma}^{m_4}\omega_{4,3}\\
&=\Sigma\tau''_4\circ\Sigma\tau'_4\circ q'_{\Sigma^{m_4}g_{4,3}}\circ((1_{\Sigma^{m_4}C_{4,3}}\cup C\tau_4)\cup C1_{\Sigma^{m_4}C_{4,3}})\circ(\widetilde{\Sigma}^{m_4}a_{4,3}\cup C1_{\Sigma^{m_4}C_{4,3}})\\
&\simeq \Sigma\tau''_4\circ q'_{\Sigma^{m^*_3}g^*_{3,2}}\circ(\Phi(\widetilde{\Sigma}^{m_4}H^4)\cup C1_{\Sigma^{m_4}C_{4,3}})\circ ((1_{\Sigma^{m_4}C_{4,3}}\cup C\tau_4)\cup C1_{\Sigma^{m_4}C_{4,3}})\\
&\hspace{2cm}\circ(\widetilde{\Sigma}^{m_4}a_{4,3}\cup C1_{\Sigma^{m_4}C_{4,3}})\qquad (\text{by Lemma 2.1(2),(3)})\\
&=q'_{\widetilde{\Sigma}^{m^*_3}g^*_{3,2}}\circ
((1_{\Sigma^{m^*_3}C^*_{3,2}}\cup C\tau''_4)\cup C1_{\Sigma^{m^*_3}C^*_{3,2}})\circ(\Phi(\widetilde{\Sigma}^{m_4}H^4)\cup C1_{\Sigma^{m_4}C_{4,3}})\\
&\hspace{2cm}\circ((1_{\Sigma^{m_4}C_{4,3}}\cup C\tau_4)\cup C1_{\Sigma^{m_4}C_{4,3}})\circ(\widetilde{\Sigma}^{m_4}a_{4,3}\cup C1_{\Sigma^{m_4}C_{4,3}})\\
&\simeq
\widetilde{\Sigma}^{m^*_3}\omega^*_{3,2}\circ
((\widetilde{\Sigma}^{m^*_3}a^*_{3,2})^{-1}\cup C1_{\Sigma^{m^*_3}C^*_{3,2}})
\circ ((1_{\Sigma^{m^*_3}C^*_{3,2}}\cup C\tau''_4)\cup C1_{\Sigma^{m^*_3}C^*_{3,2}})\\
&\hspace{2cm}\circ(\Phi(\widetilde{\Sigma}^{m_4}H^4)\cup C1_{\Sigma^{m_4}C_{4,3}})
\circ((1_{\Sigma^{m_4}C_{4,3}}\cup C\tau_4)\cup C1_{\Sigma^{m_4}C_{4,3}})\\
&\hspace{2cm}\circ(\widetilde{\Sigma}^{m_4}a_{4,3}\cup C1_{\Sigma^{m_4}C_{4,3}})
\qquad(\text{by (2.7)})
\end{align*}
and so 
\begin{align*}
&(\widetilde{\Sigma}^{m_4}\omega_{4,3})^{-1}\circ\Sigma(\tau''_4\circ\tau'_4\circ\tau_4)^{-1}\\
&\simeq \big((\Sigma^{m_4}a_{4,3})^{-1}\circ(1\cup C\tau_4)^{-1}\circ\Phi(\widetilde{\Sigma}^{m_4}H^4)^{-1}\circ(1\cup C\tau''_4)^{-1}\circ
\widetilde{\Sigma}^{m^*_3}a^*_{3,2}\cup C1_{\Sigma^{m_4}C_{4,3}}\big)\\
&\hspace{4cm}\circ(\widetilde{\Sigma}^{m^*_3}\omega^*_{3,2})^{-1}\\
&\simeq(\Sigma^{m_4}(1_{X_4}\cup C(\widetilde{\Sigma}^{m_3}a_{3,2})^{-1})\cup C1_{\Sigma^{m_4}C_{4,3}})\circ(\widetilde{\Sigma}^{m^*_3}\omega^*_{3,2})^{-1}\ (\text{by Lemma 3.1.1}).
\end{align*}
Hence
$$
(\widetilde{\Sigma}^{m_4}\omega_{4,3})^{-1}\simeq\big(\Sigma^{m_4}(1_{X_4}\cup C(\widetilde{\Sigma}^{m_3}a_{3,2})^{-1})\cup C1_{\Sigma^{m_4}C_{4,3}}\big)\circ(\widetilde{\Sigma}^{m^*_3}\omega^*_{3,2})^{-1}\circ\Sigma(\tau''_4\circ\tau'_4\circ\tau_4)
$$
and so 
\begin{align*}
g_{5,4}&=(\overline{f_4}\cup C1_{\Sigma^{m_4}C_{4,3}})\circ(\widetilde{\Sigma}^{m_4}\omega_{4,3})^{-1}\\
&\simeq(\overline{f_4}\cup C1_{\Sigma^{m_4}C_{4,3}})\circ
\big(\Sigma^{m_4}(1_{X_4}\cup C(\widetilde{\Sigma}^{m_3}a_{3,2})^{-1})\cup C1_{\Sigma^{m_4}C_{4,3}}\big)\\
&\hspace{4cm}\circ(\widetilde{\Sigma}^{m^*_3}\omega^*_{3,2})^{-1}\circ\Sigma(\tau''_4\circ\tau'_4\circ\tau_4)\\
&=(\overline{f^*_3}\cup C1_{\Sigma^{m_4}C_{4,3}})\circ(\widetilde{\Sigma}^{m^*_3}\omega^*_{3,2})^{-1}\circ\Sigma(\tau''_4\circ\tau'_4\circ\tau_4)\\
&=g^*_{4,3}\circ(1_{\Sigma^{m_{[3,1]}}X_1}\wedge\tau(\s^{m_4},\s^1)\wedge1_{(\s^1)^{\wedge 2}})\quad(\text{by (3.1.4)})
\end{align*}
that is, 
$$
g_{5,4}\simeq g^*_{4,3}\circ(1_{\Sigma^{m_{[3,1]}}X_1}\wedge\tau(\s^{m_4},\s^1)\wedge1_{(\s^1)^{\wedge 2}}).
$$
By the last relation and (2.3), we easily have
\begin{equation}
\widetilde{\Sigma}^{m_5}g_{5,4}\simeq\widetilde{\Sigma}^{m^*_4}g^*_{4,3}\circ(1_{\Sigma^{m_{[3,1]}}X_1}\wedge\tau(\s^{m_{[5,4]}},\s^1)\wedge 1_{(\s^1)^{\wedge 2}}).
\end{equation}
Let $H^5:g_{5,4}\simeq  g^*_{4,3}\circ(1_{\Sigma^{m_{[3,1]}}X_1}\wedge\tau(\s^{m_4},\s^1)\wedge1_{(\s^1)^{\wedge 2}})$ be an any homotopy and set 
\begin{align}
\Phi(H^5)&=\Phi(g_{5,4},g^*_{4,3},1_{\Sigma^{m_{[3,1]}}X_1}\wedge\tau(\s^{m_4},\s^1)\wedge 1_{(\s^1)^{\wedge 2}},1_{C_{5,4}};H^5)\nonumber\\
&\hspace{2cm} : C_{5,4}\cup_{g_{5,4}}C\Sigma^2\Sigma\Sigma^{m_{[4,1]}}X_1\to C^*_{4,3}\cup_{g^*_{4,3}}C\Sigma^2\Sigma^{m^*_{[3,1]}}X^*_1\nonumber,\\
\begin{split}
a^*_{4,3}&=\Phi(H^5)\circ a_{5,4}\circ(1_{X_5}\cup C\Sigma^{m_4}(1_{X_4}\cup C(\widetilde{\Sigma}^{m_3}a_{3,2})^{-1}))\\
&\hspace{2cm}:C^*_{4,4}\to C^*_{4,3}\cup_{g^*_{4,3}}C\Sigma^2\Sigma^{m^*_{[3,1]}}X^*_1.
\end{split}
\end{align}
Let $\mathscr{A}^*_4$ be a reduced structure on $\mathscr{S}^*_4$ containing $a^*_{4,3}$ as a member. 

When $n=5$, $\{\mathscr{S}^*_r,\overline{f_4^*},\mathscr{A}^*_4\,|\,2\le r\le 4\}$ is an 
$\ddot{s}_t$-presentation of $(f^*_4,\dots,f^*_1)$ such that 
$$
\overline{f_5}\circ\widetilde{\Sigma}^{m_5}g_{5,4}\simeq \overline{f^*_4}\circ \widetilde{\Sigma}^{m^*_4}g^*_{4,3}\circ(1_{\Sigma^{m_{[3,1]}}X_1}\wedge\tau(\s^{m_{[5,4]}},\s^1)\wedge 1_{(\s^1)^{\wedge 2}})\quad(\text{by (3.1.5)}).
$$
Hence (3.1) holds for $n=5$. 

Fourthly let $n\ge 6$ and set $\mathscr{S}^*_5=(\widetilde{\Sigma}^{m^*_4}\mathscr{S}^*_4)(\overline{f^*_4},\widetilde{\Sigma}^{m_4^*}\mathscr{A}^*_4)$. 
Then 
$$
C^*_{5,s}=C_{6,s}\ (1\le s\le 3),\quad C^*_{5,4}=C_{6,5},\quad C^*_{5,5}=X^*_5\cup_{\overline{f^*_4}}C\Sigma^{m^*_4}C^*_{4,4}.
$$
Set
\begin{align*}
\overline{f^*_5}&=\begin{cases} \overline{f_6} & n=6\\ 
\overline{f_6}\circ\Sigma^{m^*_5}(1_{X_6}\cup C\Sigma^{m_4^*}(1_{X_5}\cup C\Sigma^{m^*_3}(1_{X_4}\cup C(\widetilde{\Sigma}^{m_3}a_{3,2})^{-1}))) & n\ge 7
\end{cases},\\
\tau_5&=1_{\Sigma^{m_{[4,1]}}X_1}\wedge\tau(\s^{m_5},(\s^1)^{\wedge 3}):\Sigma^3\Sigma^{m_{[5,1]}}X_1\to \Sigma^{m_5}\Sigma^3\Sigma^{m_{[4,1]}}X_1,\\
\tau'_5&=1_{\Sigma^{m_{[3,1]}}X_1}\wedge\tau(\s^{m_4},\s^1)\wedge 1_{(\s^1)^{\wedge 2}}\wedge 1_{\s^{m_5}}:\Sigma^{m_5}\Sigma^3\Sigma^{m_{[4,1]}}X_1\to \Sigma^{m^*_4}\Sigma^2\Sigma^{m^*_{[3,1]}}X^*_1,\\
\tau''_5&=1_{\Sigma^{m^*_3}X^*_1}\wedge\tau((\s^1)^{\wedge 2},\s^{m^*_4}):\Sigma^{m^*_4}\Sigma^2\Sigma^{m^*_{[3,1]}}X^*_1\to\Sigma^2\Sigma^{m^*_{[4,1]}}X^*_1.
\end{align*}
Then 
\begin{equation}
\tau''_5\circ\tau'_5\circ\tau_5=1_{\Sigma^{m_{[3,1]}}X_1}\wedge\tau(\s^{m_{[5,4]}},\s^1)\wedge 1_{(\s^1)^{\wedge 2}}.
\end{equation}
Also we set
\begin{align*}
&\Phi(\widetilde{\Sigma}^{m_5}H^5)=\Phi(\Sigma^{m_5}g_{5,4},\Sigma^{m^*_4}g^*_{4,3},\tau'_5,1_{\Sigma^{m_5}C_{5,4}};\widetilde{\Sigma}^{m_5}H^5)\\
&\hspace{1cm}:\Sigma^{m_5}C_{5,4}\cup_{\Sigma^{m_5}g_{5,4}}C\Sigma^{m_5}\Sigma^3\Sigma^{m_{[4,1]}}X_1\to\Sigma^{m^*_4}C^*_{4,3}\cup_{\Sigma^{m^*_4}g^*_{4,3}}C\Sigma^{m^*_4}\Sigma^2\Sigma^{m^*_{[3,1]}}X^*_1,\\
&\Phi'(\widetilde{\Sigma}^{m_5}H^5)=\Phi(i_{\Sigma^{m_5}g_{5,4}},i_{\Sigma^{m^*_4}g^*_{4,3}},1_{\Sigma^{m_5}C_{5,4}},\Phi(\widetilde{\Sigma}^{m_5}H^5);
1_{i_{\Sigma^{m^*_4}g^*_{4,3}}})\\
&\hspace{1cm}:\Sigma^{m_5}C_{5,4}\cup_{\Sigma^{m_5}g_{5,4}}C\Sigma^{m_5}\Sigma^3\Sigma^{m_{[4,1]}}X_1\to \Sigma^{m^*_4}C^*_{4,3}\cup_{\widetilde{\Sigma}^{m^*_4}g^*_{4,3}}C\Sigma^{m^*_4}\Sigma^2\Sigma^{m^*_{[3,1]}}X^*_1.
\end{align*}
Here we consider \fbox{Diagram $D_5$} in Appendix B.1. 

\begin{lemma'}
We have 
\begin{align*}
\widetilde{\Sigma}^{m^*_4}a^*_{4,3}
&=(1_{\Sigma^{m^*_4}C^*_{4,3}}\cup C\tau''_5)\circ\Phi(\widetilde{\Sigma}^{m_5}H^5)
\circ(1_{\Sigma^{m_5}C_{5,4}}\cup C\tau_5)\circ\widetilde{\Sigma}^{m_5}a_{5,4}\\
&\hspace{4cm}\circ\Sigma^{m_5}(1_{X_5}\cup C\Sigma^{m_4}(1_{X_4}\cup C(\widetilde{\Sigma}^{m_3}a_{3,2})^{-1})).
\end{align*}
\end{lemma'}
\begin{proof}
We have
\begin{align*}
&\widetilde{\Sigma}^{m^*_4}a^*_{4,3}=(1_{\Sigma^{m^*_4}C^*_{4,3}}\cup C\tau''_5)\circ(\psi^{m^*_4}_{g^*_{4,3}})^{-1}\circ\Sigma^{m^*_4}a^*_{4,3}\\
&=(1_{\Sigma^{m^*_4}C^*_{4,3}}\cup C\tau''_5)\circ(\psi^{m^*_4}_{g^*_{4,3}})^{-1}\circ
\Sigma^{m^*_4}\Phi(H^5)\\
&\hspace{1cm}\circ\Sigma^{m^*_4}a_{5,4}
\circ\Sigma^{m^*_4}(1_{X_5}\cup C\Sigma^{m_4}(1_{X_4}\cup C(\widetilde{\Sigma}^{m_3}a_{3,2})^{-1}))\quad(\text{by (3.1.6)})\\
&=(1_{\Sigma^{m^*_4}C^*_{4,3}}\cup C\tau''_5)\circ\Phi(\widetilde{\Sigma}^{m_5}H^5)\circ(\psi^{m_5}_{g_{5,4}})^{-1}\\
&\hspace{1cm}\circ\Sigma^{m^*_4}a_{5,4}
\circ\Sigma^{m^*_4}(1_{X_5}\cup C\Sigma^{m_4}(1_{X_4}\cup C(\widetilde{\Sigma}^{m_3}a_{3,2})^{-1}))\\
&=(1_{\Sigma^{m^*_4}C^*_{4,3}}\cup C\tau''_5)\circ\Phi(\widetilde{\Sigma}^{m_5}H^5)\circ(1_{\Sigma^{m_5}C_{5,4}}\cup C\tau_5)\circ\\
&\hspace{1cm}(1_{\Sigma^{m_5}C_{5,4}}\cup C\tau_5)^{-1}\circ (\psi^{m_5}_{g_{5,4}})^{-1}\circ
\Sigma^{m^*_4}a_{5,4}
\circ\Sigma^{m^*_4}(1_{X_5}\cup C\Sigma^{m_4}(1_{X_4}\cup C(\widetilde{\Sigma}^{m_3}a_{3,2})^{-1}))\\
&=(1_{\Sigma^{m^*_4}C^*_{4,3}}\cup C\tau''_5)\circ\Phi(\widetilde{\Sigma}^{m_5}H^5)\circ(1_{\Sigma^{m_5}C_{5,4}}\cup C\tau_5)\circ\\
&\hspace{1cm}\widetilde{\Sigma}^{m_5}a_{5,4}\circ\Sigma^{m^*_4}(1_{X_5}\cup C\Sigma^{m_4}(1_{X_4}\cup C(\widetilde{\Sigma}^{m_3}a_{3,2})^{-1}))\quad(\text{by (2.4)}).
\end{align*}
This ends the proof.
\end{proof}

We have
\begin{align*}
&\Sigma(\tau''_5\circ\tau'_5\circ\tau_5)\circ\widetilde{\Sigma}^{m_5}\omega_{5,4}
=\Sigma\tau''_5\circ\Sigma\tau'_5\circ\Sigma\tau_5\circ q'_{\widetilde{\Sigma}^{m_5}g_{5,4}}\circ(\widetilde{\Sigma}^{m_5}a_{5,4}\cup C1_{\Sigma^{m_5}C_{5,4}})\\
&=\Sigma\tau''_5\circ\Sigma\tau'_5\circ q'_{\Sigma^{m_5}g_{5,4}}\circ((1_{\Sigma^{m_5}C_{5,4}})\cup C\tau_5)\cup C1_{\Sigma^{m_5}C_{5,4}})\circ(\widetilde{\Sigma}^{m_5}a_{5,4}\cup C1_{\Sigma^{m_5}C_{5,4}})\\
&\simeq\Sigma\tau''_5\circ q'_{\Sigma^{m^*_4}g^*_{4,3}}\circ\Phi'(\widetilde{\Sigma}^{m_5}H^5)\\
&\hspace{1cm}\circ((1_{\Sigma^{m_5}C_{5,4}})\cup C\tau_5)\cup C1_{\Sigma^{m_5}C_{5,4}})\circ(\widetilde{\Sigma}^{m_5}a_{5,4}\cup C1_{\Sigma^{m_5}C_{5,4}})\ (\text{by Lemma 2.1(1)})\\
&=q'_{\widetilde{\Sigma}^{m^*_4}g^*_{4,3}}\circ((1_{\Sigma^{m^*_4}g^*_{4,3}}\cup 
C\tau''_5)\cup C1_{\Sigma^{m^*_4}C^*_{4,3}})\circ(\Phi(\widetilde{\Sigma}^{m_5}H^5)\cup C1_{\Sigma^{m_5}C_{5,4}})\\
&\hspace{1cm}\circ((1_{\Sigma^{m_5}C_{5,4}})\cup C\tau_5)\cup C1_{\Sigma^{m_5}C_{5,4}})\circ(\widetilde{\Sigma}^{m_5}a_{5,4}\cup C1_{\Sigma^{m_5}C_{5,4}})\ (\text{by Lemma 2.1(2),(3)})\\
&=q'_{\widetilde{\Sigma}^{m^*_4}g^*_{4,3}}\circ\big((1_{\Sigma^{m^*_4}g^*_{4,3}}\cup 
C\tau''_5)\circ\Phi(\widetilde{\Sigma}^{m_5}H^5)\circ(1_{\Sigma^{m_5}C_{5,4}}\cup C\tau_5)\circ\widetilde{\Sigma}^{m_5}a_{5,4}\cup C1_{\Sigma^{m_5}C_{5,4}}\big)\\
&\simeq \widetilde{\Sigma}^{m^*_4}\omega^*_{4,3}\circ((\widetilde{\Sigma}^{m^*_4}a^*_{4,3})^{-1}\cup C1_{\Sigma^{m^*_4}C^*_{4,3}})\\
&\hspace{5mm}\circ \big((1_{\Sigma^{m^*_4}g^*_{4,3}}\cup 
C\tau''_5)\circ\Phi(\widetilde{\Sigma}^{m_5}H^5)\circ(1_{\Sigma^{m_5}C_{5,4}}\cup C\tau_5)\circ\widetilde{\Sigma}^{m_5}a_{5,4}\cup C1_{\Sigma^{m_5}C_{5,4}}\big)\ (\text{by (2.7)})\\
&\simeq\widetilde{\Sigma}^{m^*_4}\omega^*_{4,3}\circ\Big(\big(\Sigma^{m_5}(1_{X_5}\cup C\Sigma^{m_4}(1_{X_4}\cup C(\widetilde{\Sigma}^{m_3}a_{3,2})^{-1}))\big)^{-1}\cup C1_{\Sigma^{m_5}C_{5,4}}\Big)\\
&\hspace{6cm} (\text{by Lemma 3.1.2}).
\end{align*}
Hence
\begin{align*}
(\widetilde{\Sigma}^{m_5}\omega_{5,4})^{-1}&\simeq(\Sigma^{m_5}(1_{X_5}\cup C\Sigma^{m_4}(1_{X_4}\cup C(\widetilde{\Sigma}^{m_3}a_{3,2})^{-1}))\cup C1_{\Sigma^{m_5}C_{5,4}})\\
&\hspace{1cm}\circ(\widetilde{\Sigma}^{m^*_4}\omega^*_{4,3})^{-1}\circ 
(1_{\Sigma^{m_{[3,1]}}X_1}\wedge\tau(\s^{m_{[5,4]}},\s^1)\wedge 1_{(\s^1)^{\wedge 3}})
\end{align*}
by (3.1.7), and so 
\begin{align*}
g_{6,5}&=(\overline{f_5}\cup C1_{\Sigma^{m_5}C_{5,4}})\circ(\widetilde{\Sigma}^{m_5}\omega_{5,4})^{-1}\\
&\simeq(\overline{f_5}\cup C1_{\Sigma^{m_5}C_{5,4}})\circ(\Sigma^{m_5}(1_{X_5}\cup C\Sigma^{m_4}(1_{X_4}\cup C(\widetilde{\Sigma}^{m_3}a_{3,2})^{-1}))\cup C1_{\Sigma^{m_5}C_{5,4}})\\
&\hspace{1cm}\circ(\widetilde{\Sigma}^{m^*_4}\omega^*_{4,3})^{-1}\circ(1_{\Sigma^{m_{[3,1]}}X_1}\wedge\tau(\s^{m_{[5,4]}},\s^1)\wedge 1_{(\s^1)^{\wedge 3}})
\\
&=(\overline{f^*_4}\cup C1_{\Sigma^{m^*_4}C^*_{4,3}})\circ(\widetilde{\Sigma}^{m^*_4}\omega^*_{4,3})^{-1}\circ(1_{\Sigma^{m_{[3,1]}}X_1}\wedge\tau(\s^{m_{[5,4]}},\s^1)\wedge 1_{(\s^1)^{\wedge 3}})
\\
&=g^*_{5,4}\circ(1_{\Sigma^{m_{[3,1]}}X_1}\wedge\tau(\s^{m_{[5,4]}},\s^1)\wedge 1_{(\s^1)^{\wedge 3}}),
\end{align*}
that is,
$$
g_{6,5}\simeq g^*_{5,4}\circ(1_{\Sigma^{m_{[3,1]}}X_1}\wedge\tau(\s^{m_{[5,4]}},\s^1)\wedge 1_{(\s^1)^{\wedge 3}}).
$$
By the last relation, we easily have
\begin{equation}
\widetilde{\Sigma}^{m_6}g_{6,5}\simeq\widetilde{\Sigma}^{m^*_5}g^*_{5,4}\circ(1_{\Sigma^{m_{[3,1]}}X_1}\wedge\tau(\s^{m_{[6,4]}},\s^1)\wedge 1_{(\s^1)^{\wedge 3}}).
\end{equation}
Let $H^6:g_{6,5}\simeq g^*_{5,4}\circ(1_{\Sigma^{m_{[3,1]}}X_1}\wedge\tau(\s^{m_{[5,4]}},\s^1)\wedge 1_{(\s^1)^{\wedge 3}})$ be an any homotopy and set 
\begin{align*}
\Phi(H^6)&=\Phi(g_{6,5},g^*_{5,4},1_{\Sigma^{m_{[3,1]}}X_1}\wedge\tau(\s^{m_{[5,4]}},\s^1)\wedge 1_{(\s^1)^{\wedge 3}},1_{C_{6,5}};H^6)\\
&\hspace{2cm}:C_{6,5}\cup_{g_{6,5}}C\Sigma^4\Sigma^{m_{[5,1]}}X_1\to C^*_{5,4}\cup_{g^*_{5,4}}C\Sigma^3\Sigma^{m^*_{[4,1]}}X^*_1,\\
 a^*_{5,4}&=\Phi(H^6)\circ a_{6,5}\circ\big(1_{X_6}\cup C\Sigma^{m_5}(1_{X_5}\cup C\Sigma^{m_4}(1_{X_4}\cup C(\widetilde{\Sigma}^{m_3}a_{3,2})^{-1}))\big)\\
&\hspace{2cm}:C^*_{5,5}\to C^*_{5,4}\cup_{g^*_{5,4}}C\Sigma^3\Sigma^{m^*_{[4,1]}}X^*_1.
\end{align*}
Let $\mathscr{A}^*_5$ be a reduced structure on $\mathscr{S}^*_5$ containing $a^*_{5,4}$ as a member. 

When $n=6$, $\{\mathscr{S}^*_r,\overline{f^*_r},\mathscr{A}^*_r\,|\,2\le r\le 5\}$ is an $\ddot{s}_t$-presentation of $(f^*_5,\dots,f^*_1)$ such that 
$$
\overline{f_6}\circ\widetilde{\Sigma}^{m_6}g_{6,5}\simeq \overline{f^*_5}\circ\widetilde{\Sigma}^{m^*_5}g^*_{5,4}\circ(1_{\Sigma^{m_{[3,1]}}X_1}\wedge\tau(\s^{m_{[5,4]}},\s^1)\wedge 1_{(\s^1)^{\wedge 3}})\quad(\text{by (3.1.8)}).
$$
Thus (3.1) holds for $n=6$. 

Fifthly let $n\ge 7$ and set $\mathscr{S}^*_6=(\widetilde{\Sigma}^{m^*_5}\mathscr{S}^*_5)(\overline{f^*_5},\widetilde{\Sigma}^{m^*_5}\mathscr{A}^*_5)$. 
Then 
$$
C^*_{6,s}=C_{7,s}\ (1\le s\le 4),\quad C^*_{6,5}=C_{7,6},\quad C^*_{6,6}=X^*_6\cup_{\overline{f^*_5}}CC^*_{5,5}.
$$
Set
$$
\overline{f_6^*}=\begin{cases}\overline{f_7} & n=7\\
\overline{f_7}\circ\Sigma^{m_6^*}(1_{X_7}\cup C\Sigma^{m^*_5}(\cdots\cup C\Sigma^{m^*_3}(1_{X_4}\cup C(\widetilde{\Sigma}^{m_3}a_{3,2})^{-1})\cdots)) & n\ge 8\end{cases}.
$$
Proceeding with the above construction, 
we obtain inductively a desired $\ddot{s}_t$-presentation of $\vec{\bm f^*}$. 
This ends the proof of (3.1). 

\subsection{Proof of (3.2)}
Let $n\ge 4$. 
It suffices to prove (3.2) when the term on the right hand is not empty. 
Suppose that 
$$
\{f_n,\dots,f_4,[f_3,A_2,\Sigma^{m_3}f_2]\circ(\psi^{m_3}_{f_2})^{-1},\psi^{m_3}_{f_2}\circ(\Sigma^{m_3}f_2,\widetilde{\Sigma}^{m_3}A_1,\Sigma^{m_{[3,2]}}f_1)\}^{(\ddot{s}_t)}_{(m_n,\dots,m_4,0,0)}
$$
is not empty for 
$A_2:f_3\circ \Sigma^{m_3}f_2\simeq *$ and $A_1:f_2\circ \Sigma^{m_2}f_1\simeq *$. 
We set 
\begin{align*}
&f^*_1=\psi^{m_3}_{f_2}\circ(\Sigma^{m_3}f_2,\widetilde{\Sigma}^{m_3}A_1,\Sigma^{m_{[3,2]}}f_1):X^*_1\to X^*_2,\\
& f^*_2=[f_3,A_2,\Sigma^{m_3}f_2]\circ(\psi^{m_3}_{f_2})^{-1}:X^*_2\to X^*_3,\\
& f^*_k=f_{k+1}:\Sigma^{m^*_k}X^*_k\to X^*_{k+1}\ (3\le k< n).
\end{align*}
Note that $f^*_1=\Sigma^{m_3}(f_2,A_1,\Sigma^{m_2}f_1)\circ(1_{\Sigma^{m_{[2,1]}}X_1}\wedge\tau(\s^{m_3},\s^1))$ by \cite[Lemma 2.4]{OO1}. 
Take $\alpha\in\{f^*_{n-1},\dots,f^*_1\}^{(\ddot{s}_t)}_{(m^*_{n-1},\dots,m^*_1)}$ and let 
$\{\mathscr{S}^*_r,\overline{f^*_r},\mathscr{A}^*_r\,|\,2\le r< n\}$ be an $\ddot{s}_t$-presentation of $\overrightarrow{\bm f^*}$ such that $\alpha=\overline{f^*_{n-1}}\circ\widetilde{\Sigma}^{m^*_{n-1}}g^*_{n-1,n-2}$. 
We will construct an $\ddot{s}_t$-presentation $\{\mathscr{S}_r,\overline{f_r},\mathscr{A}_r\,|\,2\le r\le n\}$ of $\vec{\bm f}$ such that 
$$
\overline{f_n}\circ\widetilde{\Sigma}^{m_n}g_{n,n-1}\simeq \overline{f^*_{n-1}}\circ\widetilde{\Sigma}^{m^*_{n-1}}g^*_{n-1,n-2}\circ(1_{\Sigma^{m_{[3,1]}}X_1}\wedge\tau(\s^{m_{[n,4]}},\s^1)\wedge 1_{(\s^1)^{\wedge(n-3)}}).
$$
Once we have such an $\ddot{s}_t$-presentation, then we obtain (3.2). 
Set 
\begin{gather*}
\mathscr{S}_2=(\Sigma^{m_1}X_1;X_2,X_2\cup_{f_1}C\Sigma^{m_1}X_1;f_1,i_{f_1}),\ \mathscr{A}_2=\{1_{C_{2,2}}\},\\
\overline{f_2}=[f_2,A_1,\Sigma^{m_2}f_1]\circ(\psi^{m_2}_{f_1})^{-1}:\Sigma^{m_2}C_{2,2}\to X_3,\\
\mathscr{S}_3=(\widetilde{\Sigma}^{m_2}\mathscr{S}_2)(\overline{f_2},\widetilde{\Sigma}^{m_2}\mathscr{A}_2)\ \text{with}\ g_{3,2}=(f_2,A_1,\Sigma^{m_2}f_1). 
\end{gather*}
Then $f^*_1=\widetilde{\Sigma}^{m_3}g_{3,2}$. 
Take a homotopy equivalence $a_{3,2}\in\mathrm{TOP}^{C_{3,2}}(j_{3,2},i_{g_{3,2}})$. 
Set $\mathscr{A}_3=\{1_{C_{3,2}}, a_{3,2}\}$ which is a reduced structure on $\mathscr{S}_3$. 
By definition 
$$
\widetilde{\Sigma}^{m_3}a_{3,2}= (1_{\Sigma^{m_3}C_{3,2}}\cup C(1_{\Sigma^{m_{[2,1]}}X_1}\wedge\tau(\s^1,\s^{m_3})))\circ(\psi^{m_3}_{g_{3,2}})^{-1}\circ\Sigma^{m_3}a_{3,2}:\Sigma^{m_3}C_{3,3}\to C^*_{2,2}.
$$
By (2.5), we have
\begin{align*}
\widetilde{\Sigma}^{m_3}\omega_{3,2}&=q'_{\widetilde{\Sigma}^{m_3}g_{3,2}}\circ(\widetilde{\Sigma}^{m_3}a_{3,2}\cup C1_{\Sigma^{m_3}C_{3,2}})=\omega^*_{2,1}
\circ(\widetilde{\Sigma}^{m_3}a_{3,2}\cup C1_{\Sigma^{m_3}C_{3,2}})\\
&:\Sigma^{m_3}C_{3,3}\cup C\Sigma^{m_3}C_{3,2}\to \Sigma\Sigma\Sigma^{m_{[3,1]}}X_1
\end{align*}
and so 
$$
(\omega^*_{2,1})^{-1}\simeq (\widetilde{\Sigma}^{m_3}a_{3,2}\cup C1_{\Sigma^{m_3}C_{3,2}})\circ(\widetilde{\Sigma}^{m_3}\omega_{3,2})^{-1}.
$$
Set 
$$
\overline{f_3}=\overline{f^*_2}\circ\widetilde{\Sigma}^{m_3}a_{3,2}:\Sigma^{m_3}C_{3,3}\to X_4
$$
which is an extension of $f_3$. 
Set 
\begin{align*}
&\mathscr{S}_4=(\widetilde{\Sigma}^{m_3}\mathscr{S}_3)(\overline{f_3},\widetilde{\Sigma}^{m_3}\mathscr{A}_3),\\
&\overline{f_4}=\begin{cases} \overline{f^*_3} :\Sigma^{m_4}C_{4,3}=\Sigma^{m^*_3}C^*_{3,2}\to X^*_4=X_5 & n=4\\
\overline{f^*_3}\circ\Sigma^{m_4}(1_{X_4}\cup C\widetilde{\Sigma}^{m_3}a_{3,2}):\Sigma^{m_4}C_{4,4}\to X_5 & n\ge 5\end{cases}.
\end{align*}
Then
$$
C_{4,1}=C^*_{3,1},\quad C_{4,3}=C^*_{3,2},\quad C_{4,4}=X_4\cup_{\overline{f_3}}C\Sigma^{m_3}C_{3,3}.
$$
We have
\begin{align*}
g^*_{3,2}&=(\overline{f_2^*}\cup C1_{X^*_2})\circ(\omega^*_{2,1})^{-1}\\
&\simeq(\overline{f_2^*}\cup C1_{X^*_2})\circ(\widetilde{\Sigma}^{m_3}a_{3,2}\cup C1_{\Sigma^{m_3}C_{3,2}})\circ(\widetilde{\Sigma}^{m_3}\omega_{3,2})^{-1}\\
&=(\overline{f_3}\cup C1_{\Sigma^{m_3}C_{3,2}})\circ(\widetilde{\Sigma}^{m_3}\omega_{3,2})^{-1}
=g_{4,3}.
\end{align*}
Let $H^4:g^*_{3,2}\simeq g_{4,3}$ be an any homotopy and set
\begin{align*}
&\Phi(H^4)=\Phi(g^*_{3,2},g_{4,3},1_{\Sigma^2\Sigma^{m_{[3,1]}}X_1},1_{C^*_{3,2}};H^4)\\
&\hspace{3cm}:C^*_{3,2}\cup_{g^*_{3,2}}C\Sigma\Sigma^{m^*_{[2,1]}}X^*_1\to C_{4,3}\cup_{g_{4,3}}C\Sigma^2\Sigma^{m_{[3,1]}}X_1,\\
&a_{4,3}=\Phi(H^4)\circ a^*_{3,2}\circ(1_{X_4}\cup C\widetilde{\Sigma}^{m_3}a_{3,2}) : 
C_{4,4}\to C_{4,3}\cup_{g_{4,3}}C\Sigma^2\Sigma^{m_{[3,1]}}X_1.
\end{align*}
Let $\mathscr{A}_4$ be a reduced structure on $\mathscr{S}_4$ containing $a_{4,3}$ as a member. 

When $n=4$, $\{\mathscr{S}_r,\overline{f_r},\mathscr{A}_r\,|\,2\le r\le 4\}$ is an $\ddot{s}_t$-presentation of $(f_4,\dots,f_1)$ such that 
\begin{align*}
&\overline{f_4}\circ\widetilde{\Sigma}^{m_4}g_{4,3}=\overline{f_4}\circ\Sigma^{m_4}g_{4,3}\circ(1_{\Sigma^{m_{[3,1]}}X_1}\wedge\tau(\s^{m_4},(\s^1)^2))\\
&\simeq \overline{f_4}\circ\Sigma^{m_3^*}g^*_{3,2}\circ(1_{\Sigma^{m_{[3,1]}}X_1}\wedge\tau(\s^{m_4},(\s^1)^2))\\
&=\overline{f^*_3}\circ\widetilde{\Sigma}^{m_3^*}g^*_{3,2}\circ (1_{\Sigma^{m_{[3,1]}}X_1}\wedge\tau(\s^{m_4},\s^1)\wedge 1_{\s^1})
\end{align*}
so that (3.2) holds for $n=4$. 

Let $n\ge 5$. 
We have
\begin{align*}
&\omega_{4,3}=q'_{g_{4,3}}\circ(a_{4,3}\cup C1_{C_{4,3}})=q'_{g_{4,3}}\circ\big(\Phi(H^4)\circ a^*_{3,2}\circ(1_{X_4}\cup C\widetilde{\Sigma}^{m_3}a_{3,2})\cup C1_{C_{4,3}}\big)\\
&=q'_{g_{4,3}}\circ(\Phi(H^4)\cup C1_{C_{4,3}})\circ(a^*_{3,2}\cup C1_{C_{4,3}})\circ\big((1_{X_4}\cup C\widetilde{\Sigma}^{m_3}a_{3,2})\cup C1_{C_{4,3}}\big)\\
&\simeq q'_{g^*_{3,2}}\circ(a^*_{3,2}\cup C1_{C_{4,3}})\circ\big((1_{X_4}\cup C\widetilde{\Sigma}^{m_3}a_{3,2})\cup C1_{C_{4,3}}\big)\ (\text{by Lemma 2.1(3)})\\
&=\omega^*_{3,2}\circ\big((1_{X_4}\cup C\widetilde{\Sigma}^{m_3}a_{3,2})\cup C1_{C_{4,3}}\big)
\end{align*}
and so
\begin{align*}
&\widetilde{\Sigma}^{m_4}\omega_{4,3}=(1_{\Sigma^{m_{[3,1]}}X_1}\wedge\tau((\s^1)^{\wedge 3},\s^{m_4}))\circ\Sigma^{m_4}\omega_{4,3}\circ\psi^{m_4}_{j_{4,3}}\\
&\simeq (1_{\Sigma^{m_{[3,1]}}X_1}\wedge\tau((\s^1)^{\wedge 3},\s^{m_4}))\circ\Sigma^{m_4}\omega^*_{3,2}\circ\Sigma^{m_4}\big((1_{X_4}\cup C\widetilde{\Sigma}^{m_3}a_{3,2})\cup C1_{C_{4,3}}\big)\circ\psi^{m_4}_{j_{4,3}}\\
&=(1_{\Sigma^{m_{[3,1]}}X_1}\wedge\tau((\s^1)^{\wedge 3},\s^{m_4}))\circ\Sigma^{m_4}\omega^*_{3,2}\circ\psi^{m_4}_{j^*_{3,2}}\circ\big(\Sigma^{m_4}(1_{X_4}\cup C\widetilde{\Sigma}^{m_3}a_{3,2})\cup C1_{C_{4,3}}\big)\\
&=(1_{\Sigma^{m_{[3,1]}}X_1}\wedge\tau((\s^1)^{\wedge 3},\s^{m_4}))\circ(1_{\Sigma\Sigma^{m_{[3,1]}}X_1}\wedge\tau(\s^{m^*_3},(\s^1)^{\wedge 2}))\circ\widetilde{\Sigma}^{m^*_3}\omega^*_{3,2}\\
&\hspace{2cm}\circ\big(\Sigma^{m_4}(1_{X_4}\cup C\widetilde{\Sigma}^{m_3}a_{3,2})\cup C1_{C_{4,3}}\big)\\
&=(1_{\Sigma^{m_{[3,1]}}X_1}\wedge\tau(\s^1,\s^{m_4})\wedge 1_{(\s^1)^{\wedge 2}})
\circ\widetilde{\Sigma}^{m^*_3}\omega^*_{3,2}\circ(\Sigma^{m_4}(1_{X_4}\cup C\widetilde{\Sigma}^{m_3}a_{3,2})\cup C1_{C_{4,3}}\big).
\end{align*}
Hence
\begin{equation}
\begin{split}
(\widetilde{\Sigma}^{m_4}\omega_{4,3})^{-1}\simeq\big(\Sigma^{m_4}(1_{X_4}&\cup C\widetilde{\Sigma}^{m_3}a_{3,2})\cup C1_{\Sigma^{m_4}C_{4,3}}\big)^{-1}\circ(\widetilde{\Sigma}^{m^*_3}\omega^*_{3,2} )^{-1}\\
&\circ(1_{\Sigma^{m_{[3,1]}}X_1}\wedge\tau(\s^{m_4},\s^1)\wedge 1_{(\s^1)^{\wedge 2}}).
\end{split}
\end{equation}
Set $\mathscr{S}_5=(\widetilde{\Sigma}^{m_4}\mathscr{S}_4)(\overline{f_4},\widetilde{\Sigma}^{m_4}\mathscr{A}_4)$. 
Then 
$$
C_{5,s}=C^*_{4,s}\ (1\le s\le 2), \quad C_{5,4}=C^*_{4,3},\quad  C_{5,5}=X_5\cup_{\overline{f_4}}C\Sigma^{m_4}C_{4,4}.
$$
By (3.2.1), we have
\begin{align*}
&g_{5,4}=(\overline{f_4}\cup C1_{\Sigma^{m_4}C_{4,3}})\circ(\widetilde{\Sigma}^{m_4}\omega_{4,3})^{-1}\\
&\simeq (\overline{f_4}\cup C1_{\Sigma^{m_4}C_{4,3}})\circ(\Sigma^{m_4}(1_{X_4}\cup C\widetilde{\Sigma}^{m_3}a_{3,2})\cup C1_{\Sigma^{m_4}C_{4,3}})^{-1}\circ(\widetilde{\Sigma}^{m^*_3}\omega^*_{3,2})^{-1}\\
&\hspace{2cm}\circ(1_{\Sigma^{m_{[3,1]}}X_1}\wedge\tau(\s^{m_4},\s^1)\wedge 1_{(\s^1)^{\wedge 2}})\\
&\simeq(\overline{f^*_3}\cup C1_{\Sigma^{m_4}C_{4,3}})\circ(\widetilde{\Sigma}^{m^*_3}\omega^*_{3,2})^{-1}\circ(1_{\Sigma^{m_{[3,1]}}X_1}\wedge\tau(\s^{m_4},\s^1)\wedge 1_{(\s^1)^{\wedge 2}})\\
&=g^*_{4,3}\circ((1_{\Sigma^{m_{[3,1]}}X_1}\wedge\tau(\s^{m_4},\s^1)\wedge 1_{(\s^1)^{\wedge 2}}),
\end{align*}
that is, 
\begin{equation}
g^*_{4,3}\simeq g_{5,4}\circ((1_{\Sigma^{m_{[3,1]}}X_1}\wedge\tau(\s^1,\s^{m_4})\wedge 1_{(\s^1)^{\wedge 2}}).
\end{equation}
Let $H^5:g^*_{4,3}\simeq g_{5,4}\circ((1_{\Sigma^{m_{[3,1]}}X_1}\wedge\tau(\s^1,\s^{m_4})\wedge 1_{(\s^1)^{\wedge 2}})$ be an any homotopy and set 
$$
a_{5,4}=\Phi(H^5)\circ a^*_{4,3}\circ(1_{X_5}\cup C\Sigma^{m_4}(1_{X_4}\cup C\widetilde{\Sigma}^{m_3}a_{3,2})):C_{5,5}\to C_{5,4}\cup_{g_{5,4}}C\Sigma^3\Sigma^{m_{[4,1]}}X_1.
$$
Then $a_{5,4}\in\mathrm{TOP}^{C_{5,4}}(j_{5,4},i_{g_{5,4}})$ is a homotopy equivalence in the category $\mathrm{TOP}^{C_{5,4}}$. 
Let $\mathscr{A}_5$ be a reduced structure on $\mathscr{S}_5$ containing $a_{5,4}$ as a member. 
By (2.3) and (3.2.2), we have
\begin{equation}
\widetilde{\Sigma}^{m_5}g_{5,4}\simeq \widetilde{\Sigma}^{m^*_4}g^*_{4,3}\circ(1_{\Sigma^{m_{[3,1]}}X_1}\wedge\tau(\s^{m_{[5,4]}},\s^1)\wedge 1_{(\s^1)^{\wedge 2}}).
\end{equation}
Set
$$
\overline{f_5}=\begin{cases}\overline{f^*_4}:\Sigma^{m_5}C_{5,4}\to X_6 & n=5\\
\overline{f^*_4}\circ\Sigma^{m_5}(1_{X_5}\cup C\Sigma^{m_4}(1_{X_4}\cup C\widetilde{\Sigma}^{m_3}a_{3,2})):\Sigma^{m_5}C_{5,5}\to X_6 & n\ge 6\end{cases}
$$
which is an extension of $f_5$. 

When $n=5$, $\{\mathscr{S}_r,\overline{f_r},\mathscr{A}_r\,|\,2\le r\le 5\}$ is an $\ddot{s}_t$-presentation of $(f_5,\dots,f_1)$ such that 
\begin{align*}
&\overline{f_5}\circ\widetilde{\Sigma}^{m_5}g_{5,4}= \overline{f^*_4}\circ 
\widetilde{\Sigma}^{m_5}g_{5,4}\\
&\simeq \overline{f^*_4}\circ \widetilde{\Sigma}^{m^*_4}g^*_{4,3}\circ(1_{\Sigma^{m_{[3,1]}}X_1}\wedge\tau(\s^{m_{[5,4]}},\s^1)\wedge 1_{(\s^1)^{\wedge 2}})\quad(\text{by (3.2.3)}).
\end{align*}
Hence (3.2) holds for $n=5$. 

Let $n\ge 6$. 
Set $\mathscr{S}_6=(\widetilde{\Sigma}^{m_5}\mathscr{S}_5)(\overline{f_5},\widetilde{\Sigma}^{m_5}\mathscr{A}_5)$. 
Then 
$$
C_{6,s}=C^*_{5,s}\ (1\le s\le 3), \quad C_{6,5}=C^*_{5,4}, \quad  C_{6,6}=X_6\cup_{\overline{f_5}}C\Sigma^{m_5}C_{5,5}.
$$ 
Set
\begin{gather*}
\overline{f_6}=\begin{cases} \overline{f^*_5}:\Sigma^{m_6}C_{6,5}=\Sigma^{m^*_5}C^*_{5,4}\to X_7 & n=6\\
\overline{f^*_5}\circ\Sigma^{m_6}(1_{X_6}\cup C\Sigma^{m_5}(1_{X_5}\cup C\Sigma^{m_4}(1_{X_4}\cup C\widetilde{\Sigma}^{m_3}a_{3,2}))):\Sigma^{m_6}C_{6,6}\to X_7 & n\ge 7\end{cases},\\
\tau_5=1_{\Sigma^{m_4}\Sigma\Sigma^{m_{[3,1]}}X_1}\wedge\tau(\s^{m_5},\s^2),\\
\tau'_5=1_{\Sigma^{m_{[3,1]}}X_1}\wedge\tau(\s^1,\s^{m_4})\wedge 1_{\s^2}\wedge 1_{\s^{m_5}},\\
\tau''_5=1_{\Sigma^{m_{[4,1]}}X_1}\wedge\tau(\s^3,\s^{m_5}),\\
\Phi(\widetilde{\Sigma}^{m_5}H^5)=\Phi(\Sigma^{m^*_4}g^*_{4,3},\Sigma^{m_5}g_{5,4},\tau'_5,1_{\Sigma^{m_5}C_{5,4}};\widetilde{\Sigma}^{m_5}H^5),\\
\Phi'(\widetilde{\Sigma}^{m_5}H^5)=\Phi(i_{\Sigma^{m^*_4}g^*_{4,3}},i_{\Sigma^{m_5}g_{5,4}},1_{\Sigma^{m_5}C_{5,4}},\Phi(\widetilde{\Sigma}^{m_5}H^5);1_{i_{\Sigma^{m_5}g_{5,4}}}).\end{gather*}
Note that 
$\tau''_5\circ\tau'_5\circ\tau_5=1_{\Sigma^{m_{[3,1]}}X_1}\wedge\tau(\s^1,\s^{m_{[5,4]}})\wedge 1_{\s^1}$. 
Here we consider \fbox{Diagram $D^*_5$} in Appendix B.2. 

\begin{lemma'}
$\widetilde{\Sigma}^{m_5}a_{5,4}=(1\cup C\tau''_5)\circ\Phi(\widetilde{\Sigma}^{m_5}H^5)\circ(1\cup C\tau_5)\circ\widetilde{\Sigma}^{m^*_4}a^*_{4,3}\circ\Sigma^{m_5}(1_{X_5}\cup C\Sigma^{m_4}(1_{X_4}\cup C\widetilde{\Sigma}^{m_3}a_{3,2}))$.
\end{lemma'}
\begin{proof}
We have
\begin{align*}
&\widetilde{\Sigma}^{m_5}a_{5,4}=(1_{\Sigma^{m_5}C_{5,4}}\cup C\tau''_5)\circ(\psi^{m_5}_{g_{5,4}})^{-1}\circ\Sigma^{m_5}a_{5,4}\\
&=(1_{\Sigma^{m_5}C_{5,4}}\cup C\tau''_5)\circ(\psi^{m_5}_{g_{5,4}})^{-1}\circ\Sigma^{m_5}\Phi(H^5)\circ\Sigma^{m_5}a^*_{4,3}\\
&\hspace{4cm}\circ\Sigma^{m_5}(1_{X_5}\cup C\Sigma^{m_4}(1_{X_4}\cup C\widetilde{\Sigma}^{m_3}a_{3,2}))\\
&=(1_{\Sigma^{m_5}C_{5,4}}\cup C\tau''_5)\circ\Phi(\widetilde{\Sigma}^{m_5}H^5)\circ(\psi^{m_5}_{g^*_{4,3}})^{-1}\circ\Sigma^{m_5}a^*_{4,3}\\
&\hspace{4cm}\circ\Sigma^{m_5}(1_{X_5}\cup C\Sigma^{m_4}(1_{X_4}\cup C\widetilde{\Sigma}^{m_3}a_{3,2}))\\
&=(1_{}\cup C\tau''_5)\circ\Phi(\widetilde{\Sigma}^{m_5}H^5)\circ(1\cup C\tau_5)
\circ(1_{}\cup C\tau_5)^{-1}\circ(\psi^{m_5}_{g^*_{4,3}})^{-1}\circ\Sigma^{m_5}a^*_{4,3}\\
&\hspace{4cm}\circ\Sigma^{m_5}(1_{X_5}\cup C\Sigma^{m_4}(1_{X_4}\cup C\widetilde{\Sigma}^{m_3}a_{3,2}))\\
&=(1_{}\cup C\tau''_5)\circ\Phi(\widetilde{\Sigma}^{m_5}H^5)\circ(1\cup C\tau_5)\circ\widetilde{\Sigma}^{m_5}a^*_{4,3}\circ\Sigma^{m_5}(1_{X_5}\cup C\Sigma^{m_4}(1_{X_4}\cup C\widetilde{\Sigma}^{m_3}a_{3,2})).
\end{align*}
This ends the proof.
\end{proof}
It follows from Lemma 2.1 that $\Phi'(\widetilde{\Sigma}^{m_5}H^5)\simeq \Phi(\widetilde{\Sigma}^{m_5}H^5)\cup C1_{\Sigma^{m_5}C_{5,4}}$ and 
$\Sigma\tau'_5\circ q'_{\Sigma^{m_4^*}g^*_{4,3}}\simeq q'_{\Sigma^{m_5}g_{5,4}}\circ\Phi'(\widetilde{\Sigma}^{m_5}H^5)$ so that 
$$
\Sigma\tau'_5\circ q'_{\Sigma^{m_4^*}g^*_{4,3}}\simeq q'_{\Sigma^{m_5}g_{5,4}}\circ(\Phi(\widetilde{\Sigma}^{m_5}H^5)\cup C1_{\Sigma^{m_5}C_{5,4}}). 
$$
We then have
\begin{align*}
&\Sigma(\tau''_5\circ\tau'_5\circ\tau_5)\circ\widetilde{\Sigma}^{m^*_4}\omega^*_{4,3}\\
&\simeq q'_{\widetilde{\Sigma}^{m_5}g_{5,4}}\circ((1\cup C\tau''_5)\cup C1)\circ(\Phi(\widetilde{\Sigma}^{m_5}H^5)\cup C1)\circ((1\cup C\tau_5)\cup C1)\circ(\widetilde{\Sigma}^{m^*_4}a^*_{4,3}\cup C1)\\
&\simeq \widetilde{\Sigma}^{m_5}\omega_{5,4}\circ((\widetilde{\Sigma}^{m_5}a_{5,4})^{-1}\cup C1)\circ((1\cup C\tau''_5)\cup C1)\circ(\Phi(\widetilde{\Sigma}^{m_5}H^5)\cup C1)\\
&\hspace{2cm}\circ((1\cup C\tau_5)\cup C1)\circ(\widetilde{\Sigma}^{m^*_4}a^*_{4,3}\cup C1)\\
&\simeq \widetilde{\Sigma}^{m_5}\omega_{5,4}\circ\big(\Sigma^{m_5}(1_{X_5}\cup C\Sigma^{m_4}(1_{X_4}\cup C\widetilde{\Sigma}^{m_3}a_{3,2}))^{-1}\cup C1_{\Sigma^{m_5}C_{5,4}}\big)\quad(\text{by Lemma 3.2.1}).
\end{align*}
Hence
$$
\Sigma(\tau''_5\circ\tau'_5\circ\tau_5)\circ\widetilde{\Sigma}^{m^*_4}\omega^*_{4,3}
\circ\big(\Sigma^{m_5}(1_{X_5}\cup C\Sigma^{m_4}(1_{X_4}\cup C\widetilde{\Sigma}^{m_3}a_{3,2}))\cup C1_{\Sigma^{m^*_4}C^*_{4,3}}\big)\simeq\widetilde{\Sigma}^{m_5}\omega_{5,4}
$$
and
\begin{align*}
g_{6,5}&=(\overline{f_5}\cup C1_{\Sigma^{m_5}C_{5,4}})\circ(\widetilde{\Sigma}^{m_5}\omega_{5,4})^{-1}\\
&\simeq (\overline{f_5}\cup C1_{\Sigma^{m_5}C_{5,4}})\circ(\Sigma^{m_5}(1_{X_5}\cup C\Sigma^{m_4}(1_{X_4}\cup C\widetilde{\Sigma}^{m_3}a_{3,2}))^{-1}\cup C1_{\Sigma^{m_4^*}C^*_{4,3}})\\
&\hspace{4cm}\circ(\widetilde{\Sigma}^{m^*_4}\omega^*_{4,3})^{-1}
\circ\Sigma(\tau''_5\circ\tau'_5\circ\tau_5)^{-1}\\
&\simeq(\overline{f^*_4}\cup C1_{\Sigma^{m^*_4}C^*_{4,3}})\circ(\widetilde{\Sigma}^{m^*_4}\omega^*_{4,3})^{-1}\circ\Sigma(\tau''_5\circ\tau'_5\circ\tau_5)^{-1}\\
&=g^*_{5,4}\circ(1_{\Sigma^{m_{[3,1]}}X_1}\wedge\tau(\s^{m_{[5,4]}},\s^1)\wedge 1_{(\s^1)^{\wedge 2}})
\end{align*}
Let $H^6:g^*_{5,4}\simeq g_{6,5}\circ (1_{\Sigma^{m_{[3,1]}}X_1}\wedge\tau(\s^1,\s^{m_{[5,4]}})\wedge 1_{(\s^1)^{\wedge 2}})$ be an any homotopy and set
\begin{align*}
a_{6,5}&=\Phi(H^6)\circ a^*_{5,4}\circ(1_{X_6}\cup C\Sigma^{m_5}(1_{X_5}\cup C\Sigma^{m_4}(1_{X_4}\cup C\widetilde{\Sigma}^{m_3}a_{3,2})))\\
&:C_{6,6}\to C_{6,5}\cup_{g_{6,5}}C\Sigma^4\Sigma^{m_{[5,1]}}X_1.
\end{align*}
Let $\mathscr{A}_6$ be a reduced structure on $\mathscr{S}_6$ containing $a_{6,5}$ as a member. 

When $n=6$, $\{\mathscr{S}_r,\overline{f_r},\mathscr{A}_r\,|\,2\le r\le 6\}$ is an $\ddot{s}_t$-presentation of $(f_6,\dots,f_1)$ such that 
\begin{align*}
&\overline{f_6}\circ\widetilde{\Sigma}^{m_6}g_{6,5}=\overline{f^*_5}\circ\Sigma^{m_6}g_{6,5}\circ(1_{\Sigma_{m_{[5,1]}}X_1}\wedge\tau(\s^{m_6},\s^4))\\
&\simeq\overline{f^*_5}\circ\Sigma^{m^*_5}g^*_{5,4}\circ(1_{\Sigma^{m_{[3,1]}}X_1}\wedge\tau(\s^{m_{[5,4]}},\s^1)\wedge 1_{(\s^1)^{\wedge 3}}\wedge 1_{\s^{m_6}})\circ(1_{\Sigma^{m_{[5,1]}}X_1}\wedge\tau(\s^{m_6},\s^4))\\
&=\overline{f^*_5}\circ\widetilde{\Sigma}^{m^*_5}g^*_{5,4}\circ(1_{\Sigma^{m^*_{[4,1]}}X^*_1}\wedge\tau(\s^3,\s^{m^*_5}))\circ (1_{\Sigma^{m_{[3,1]}}X_1}\wedge\tau(\s^{m_{[5,4]}},\s^1)\wedge 1_{(\s^1)^{\wedge 3}}\wedge 1_{\s^{m_6}})\\
&\hspace{4cm}\circ(1_{\Sigma^{m_{[5,1]}}X_1}\wedge\tau(\s^{m_6},\s^4))\\
&=\overline{f^*_5}\circ\widetilde{\Sigma}^{m^*_5}g^*_{5,4}\circ(1_{\Sigma^{m_{[3,1]}}X_1}\wedge\tau(\s^{m_{[6,4]}},\s^1)\wedge 1_{(\s^1)^{\wedge 3}}).
\end{align*}
Hence (3.2) holds for $n=6$.

Let $n\ge 7$. 
Proceeding the above process, we obtain inductively an $\ddot{s}_t$-presentation $\{\mathscr{S}_r,\overline{f_r},\mathscr{A}_r\,|\,2\le r\le n\}$ of $(f_n,\dots,f_1)$ such that 
$$
\overline{f_n}\circ\widetilde{\Sigma}^{m_n}g_{n,n-1}\simeq \overline{f^*_{n-1}}\circ\widetilde{\Sigma}^{m^*_{n-1}}g^*_{n-1,n-2}\circ(1_{\Sigma^{m_{[3,1]}}X_1}\wedge\tau(\s^{m_{[n,4]}},\s^1)\wedge 1_{(\s^1)^{\wedge(n-3)}}).
$$
This ends the proof of (3.2). 

\section{Proof of Corollary 1.2}
We use mathematical induction on $n$. 
The first case $n=3$ holds by \cite[Proposition~2.3]{T2}. 
Suppose that the assertion holds for some $n\ge 3$. 
We will prove the assertion for $n+1$. 
Suppose that $X_{n+2}=\s^{m+1}$, $X_k$ is a connected pointed CW complex for all $k$ with $n+1\ge k\ge 1$, $\vec{\bm m}=(m_{n+1},\dots,m_1)$ is a sequence of non negative integers with $m_{n+1}\ge 1$, and $\vec{\bm f}=(f_{n+1},\dots,f_1)$ is a sequence of 
pointed maps $f_k:\Sigma^{m_k}X_k\to X_{k+1}\ (1\le k\le n+1)$. 
We must prove $H\{\vec{\bm f}\,\}^{(\ddot{s}_t)}_{\vec{\bm m}}\subset\{H(f_{n+1}),f_n,\dots,f_1\,\}^{(\ddot{s}_t)}_{\vec{\bm m}}$. 
Set 
\begin{align*}
\varphi&=1_{\Sigma^{m_{[3,1]}}X_1}\wedge\tau(\s^{m_{[n+1,4]}},\s^1)\wedge 1_{\s^{n-2}}\\
&:\Sigma^{n-1}\Sigma^{m_{[n+1,1]}}X_1=\Sigma^{n-2}\Sigma\Sigma^{m_{[n+1,1]}}X_1\to \Sigma^{n-2}\Sigma^{m_{[n+1,4]}}\Sigma\Sigma^{m_{[3,1]}}X_1.
\end{align*}
We have 
\begin{align*}
&H\{\vec{\bm f}\,\}^{(\ddot{s}_t)}_{\vec{\bm m}}\\
&=\bigcup_{A_2,A_1}H\circ\varphi^*\{f_{n+1},\dots,f_4,[f_3,A_2,\Sigma^{m_3}f_2],(\Sigma^{m_3}f_2,\widetilde{\Sigma}^{m_3}A_1,\Sigma^{m_{[3,2]}}f_1)\}^{(\ddot{s}_t)}_{(m_{n+1},\dots,m_4,0,0)}\\
&\hspace{6cm}(\text{by Theorem 1.1})\\
&=\bigcup_{A_2,A_1}\varphi^*\circ H\{f_{n+1},\dots,f_4,[f_3,A_2,\Sigma^{m_3}f_2],(\Sigma^{m_3}f_2,\widetilde{\Sigma}^{m_3}A_1,\Sigma^{m_{[3,2]}}f_1)\}^{(\ddot{s}_t)}_{(m_{n+1},\dots,m_4,0,0)}\\
&\hspace{6cm}(\text{since $H\circ\varphi^*=\varphi^*\circ H$})\\
&\subset\bigcup_{A_2,A_1}\varphi^*\{H(f_{n+1}),f_n,\dots,f_4,[f_3,A_2,\Sigma^{m_3}f_2],(\Sigma^{m_3}f_2,\widetilde{\Sigma}^{m_3}A_1,\Sigma^{m_{[3,2]}}f_1)\}^{(\ddot{s}_t)}_{(m_{n+1},\dots,m_4,0,0)}\\
&\hspace{6cm}(\text{by the inductive assumption})\\
&=\{H(f_{n+1}),f_n,\dots,f_1\}^{(\ddot{s}_t)}_{\vec{\bm m}}\qquad(\text{by Theorem 1.1}).
\end{align*}
This completes the induction and ends the proof of Corollary 1.2.

\section{Brackets in $\mathrm{TOP}^*$}
In this section, we work in $\mathrm{TOP}^*$. 

Let $n\ge 3$ be an integer, $(X_{n+1},\dots,X_1)$ a sequence of spaces, 
$\vec{\bm m}=(m_n,\dots,m_1)$ a sequence of non-negative integers, and $\vec{\bm f}=(f_n,\dots,f_1)$ a sequence of maps $f_k:\Sigma^{m_k}X_k\to X_{k+1}$. 
We will define three brackets $\{\vec{\bm f}\,\}_{\vec{\bm m}}$, $\{\vec{\bm f}\,\}_{\vec{\bm m}}'$, and $\{\vec{\bm f}\,\}_{\vec{\bm m}}''$. 

For $n=3$, they are the classical Toda bracket $\{f_3,f_2,\Sigma^{m_2}f_1\}_{m_3}$, that is, 
we define 
\begin{align*}
&\{\vec{\bm f}\,\}_{\vec{\bm m}}=\{\vec{\bm f}\,\}_{\vec{\bm m}}'=\{\vec{\bm f}\,\}_{\vec{\bm m}}''\\
&=\bigcup_{A_2,A_1}\{[f_3,A_2,\Sigma^{m_3}f_2],(\Sigma^{m_3}f_2,\widetilde{\Sigma}^{m_3}A_1,\Sigma^{m_{[3,2]}}f_1)\}_{(0,0)}
\subset[\Sigma\Sigma^{m_{[3,1]}}X_1,X_4].
\end{align*} 

For $n\ge 4$, we define $\{\vec{\bm f}\}_{\vec{\bm m}}$, $\{\vec{\bm f}\}_{\vec{\bm m}}'$, and $\{\vec{\bm f}\}_{\vec{\bm m}}''$ inductively by 
\begin{gather*}
\{\vec{\bm f}\}_{\vec{\bm m}}=\bigcup_{\text{admissible\ }\vec{\bm A}}\{f_n,\dots,f_4,[f_3,A_2,\Sigma^{m_3}f_2],(\Sigma^{m_3}f_2,\widetilde{\Sigma}^{m_3}A_1,\Sigma^{m_{[3,2]}}f_1)\}_{(m_n,\dots,m_4,0,0)},\\
\{\vec{\bm f}\}_{\vec{\bm m}}'=\bigcup_{\text{all\ }\vec{\bm A}}\{f_n,\dots,f_4,[f_3,A_2,\Sigma^{m_3}f_2],(\Sigma^{m_3}f_2,\widetilde{\Sigma}^{m_3}A_1,\Sigma^{m_{[3,2]}}f_1)\}_{(m_n,\dots,m_4,0,0)}',\\
\{\vec{\bm f}\}_{\vec{\bm m}}''=\bigcup_{A_2,A_1}\{f_n,\dots,f_4,[f_3,A_2,\Sigma^{m_3}f_2],(\Sigma^{m_3}f_2,\widetilde{\Sigma}^{m_3}A_1,\Sigma^{m_{[3,2]}}f_1)\}_{(m_n,\dots,m_4,0,0)}'',
\end{gather*}
where unions $\bigcup_{\text{admissible}\,\vec{\bm A}}$\,, $\bigcup_{\text{all}\,\vec{\bm A}}$\,, and $\bigcup_{A_2,A_1}$ are ones used in Theorem 1.1. 
By mathematical induction, we easily have

\begin{lemma}
If $n\ge 4$, then $\{\vec{\bm f}\}_{\vec{\bm m}}\subset\{\vec{\bm f}\}_{\vec{\bm m}}'\subset\{\vec{\bm f}\}_{\vec{\bm m}}''\subset[(\Sigma\Sigma^{m_n})\cdots(\Sigma\Sigma^{m_4})\Sigma\Sigma^{m_{[3,1]}}X_1,X_{n+1}]$.
\end{lemma}

Define a homeomorphism
$$
h_{\vec{\bm m}}:(\s^{m_{[3,1]}}\wedge\s^1)\wedge(\s^{m_4}\wedge\s^1)\wedge\cdots\wedge(\s^{m_n}\wedge\s^1)\underset{\approx}{\to}\s^{m_{[n,1]}}\wedge\underbrace{\s^1\wedge\cdots\wedge\s^1}_{n-2}=\s^{m_{[n,1]}}\wedge\s^{n-2}
$$
by 
$h_{\vec{\bm m}}(s_1\wedge s_2\wedge s_3\wedge\overline{t_3}\wedge(s_4\wedge\overline{t_4})\wedge\cdots\wedge(s_n\wedge\overline{t_n}))
=s_1\wedge\cdots\wedge s_n\wedge\overline{t_3}\wedge\cdots\wedge\overline{t_n}
$ for $s_k\in\s^{m_k},\,\overline{t_k}\in\s^1$. 
By mathematical induction, it follows from Theorem 1.1 that 

\begin{prop}
If $n\ge 4$ and $X_k$ is well pointed for all $k$ with $1\le k\le n+1$, then 
$\{\vec{\bm f}\,\}_{\vec{\bm m}}=\{\vec{\bm f}\,\}_{\vec{\bm m}}'=\{\vec{\bm f}\,\}''_{\vec{\bm m}}=\{\vec{\bm f}\}_{\vec{\bm m}}^{(\ddot{s}_t)}\circ(1_{X_1}\wedge h_{\vec{\bm m}})$.
\end{prop}

\begin{prob}
Does $\{\vec{\bm f}\,\}_{\vec{\bm m}}=\{\vec{\bm f}\,\}_{\vec{\bm m}}'=\{\vec{\bm f}\,\}_{\vec{\bm m}}''$ hold always for $n\ge 4$?
\end{prob}

If pointed versions of propositions 2.1 and 2.2 of \cite{OO2} hold, that is, if the conjecture below is affirmative, then our consideration in \cite{OO2,OO3} can be developed in $\mathrm{TOP}^*$ and Problem 5.3 above is affirmative. 

\begin{conj}
\begin{enumerate}
\item Given a commutative triangle in $\mathrm{TOP}^*$
$$
\xymatrix{
& A \ar[dl]_-i \ar[dr]^-{i'}  &\\
X  \ar[rr]^-f  & &X'
}
$$
if $i$ and $i'$ are pointed cofibrations and $f$ is a pointed homotopy equivalence, then $f:i\to i'$ is a homotopy equivalence in $\mathrm{TOP}^{*A}$, the category of pointed spaces under the pointed space $A$.
\item If a pointed map $j:A\to X$ is a pointed cofibration, then for any pointed map $f:X\to Y$ 
the pointed map $1_Y\cup Cj:Y\cup_{f\circ j}CA\to Y\cup_f CX$ is a pointed cofibration.
\end{enumerate}
\end{conj}

\begin{rem}
We can prove that $\{\vec{\bm f}\,\}_{\vec{\bm m}}$ depends only on homotopy classes of $f_k\ (1\le k\le n)$ 
and that if $n\ge 3$ and $\{\vec{\bm f}\,\}_{\vec{\bm m}}$ is not empty then $f_{k+1}\circ\Sigma^{m_{k+1}}f_k\simeq *$ 
for all $k$ with $n>k\ge 1$ and $\{f_k,\dots,f_2,f_1\}_{(m_k,\dots,m_1)}$ contains $0$ for all $k$ with $n>k\ge 2$, 
where $\{f_2,f_1\}_{(m_2,m_1)}$ denotes the one point set consisting of the homotopy class of $f_2\circ\Sigma^{m_2}f_1$. 
Similar statements hold for $\{\vec{\bm f}\,\}_{\vec{\bm m}}'$ and $\{\vec{\bm f}\,\}_{\vec{\bm m}}''$. 
\end{rem}


\begin{appendix}
\section{Proof of Lemma 3.1} 
The following lemma includes Lemma 3.1.
 
\begin{lemma}
Let $(f_n,\dots,f_0)$ be a sequence of maps $f_k:\Sigma^{m_k}X_k\to X_{k+1}\ (0\le k\le n)$ with $n\ge 3$. 
\begin{enumerate}
\item We have
$$
\{f_n,\dots,f_2,f_1\circ\Sigma^{m_1}f_0\}^{(\ddot{s}_t)}_{(m_n,\dots,m_2,m_1+m_0)}
\subset\{f_n,\dots,f_3,f_2\circ\Sigma^{m_2}f_1,f_0\}^{(\ddot{s}_t)}_{(m_n,\dots,m_3,m_2+m_1,m_0)}.
$$
\item If $m_1=0$ and $f_1:X_1\to X_2$ is a homotopy equivalence, then 
$$
\{f_n,\dots,f_2,f_1\circ f_0\}^{(\ddot{s}_t)}_{(m_n,\dots,m_2,m_0)}=\{f_n,\dots,f_3,f_2\circ\Sigma^{m_2}f_1,f_0\}^{(\ddot{s}_t)}_{(m_n,\dots,m_2,m_0)}.
$$
\end{enumerate}
\end{lemma}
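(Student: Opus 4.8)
The plan is to prove part (1) by the presentation-transport technique of Section~3 and then to deduce part (2) from part (1) by inserting a homotopy inverse of $f_1$; the substance is part (1), which parallels \cite[(4.3)]{OO3}.

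For part (1) I would fix an element $\alpha$ of the left-hand bracket, realized by an $\ddot{s}_t$-presentation $\{\mathscr{S}_r,\overline{f_r},\mathscr{A}_r\}$ of the sequence whose innermost map is $f_1\circ\Sigma^{m_1}f_0$, and build from it an $\ddot{s}_t$-presentation of the right-hand sequence (innermost maps $f_0$ and $f_2\circ\Sigma^{m_2}f_1$) realizing the same $\alpha$. The device linking the two towers is a comparison at their bases. Writing $K=X_1\cup_{f_0}C\Sigma^{m_0}X_0$ for the mapping cone of $f_0$ and $L=X_2\cup_{f_1\circ\Sigma^{m_1}f_0}C\Sigma^{m_1+m_0}X_0$ for the mapping cone of the composite, the strictly commutative square with top $\Sigma^{m_1}f_0$, bottom $f_1\circ\Sigma^{m_1}f_0$, left identity and right $f_1$ produces a map $\theta=\Phi(\Sigma^{m_1}f_0,f_1\circ\Sigma^{m_1}f_0,1,f_1;1)\circ(\psi^{m_1}_{f_0})^{-1}\colon\Sigma^{m_1}K\to L$ which restricts to $f_1$ on the base $\Sigma^{m_1}X_1$ and satisfies $\Phi(\dots)\simeq f_1\cup C1$ by Lemma~2.1(5). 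The right-hand extensions are then defined by precomposition: the extension of $f_2\circ\Sigma^{m_2}f_1$ over $\Sigma^{m_2+m_1}K$ is $\overline{f_2}\circ\Sigma^{m_2}\theta$, and the equivalence $\theta$ propagates upward through the towers exactly as the iterated maps $1_{X_k}\cup C(\cdots)$ of Section~3, so that the structures $\mathscr{A}_r$ and extensions $\overline{f_r}$ ($r\ge 3$) of the given presentation are transported by precomposition with these propagated equivalences. Tracking the effect on $g_{n,n-1}$ and $\overline{f_n}$, as in the identity chains (3.1.2)--(3.1.8), then shows that $\overline{f_n}\circ\widetilde{\Sigma}^{m_n}g_{n,n-1}$ is carried to a representative of the same class $\alpha$, which gives the containment.

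For part (2) the inclusion $\subset$ is part (1), and I would obtain $\supset$ from part (1) as well. Let $f_1^{-1}\colon X_2\to X_1$ be a homotopy inverse of $f_1$ (legitimate since $m_1=0$). Applying part (1) to the sequence $(f_n,\dots,f_3,\,f_2\circ\Sigma^{m_2}f_1,\,f_1^{-1},\,f_1\circ f_0)$, in which $f_1^{-1}$ occupies the degree-$0$ slot, gives, at degrees $(m_n,\dots,m_3,m_2,m_0)$,
$$\{f_n,\dots,f_3,\,f_2\circ\Sigma^{m_2}f_1,\,f_1^{-1}\circ(f_1\circ f_0)\}\ \subset\ \{f_n,\dots,f_3,\,(f_2\circ\Sigma^{m_2}f_1)\circ\Sigma^{m_2}f_1^{-1},\,f_1\circ f_0\}.$$
Now $f_1^{-1}\circ f_1\circ f_0\simeq f_0$ and $(f_2\circ\Sigma^{m_2}f_1)\circ\Sigma^{m_2}f_1^{-1}=f_2\circ\Sigma^{m_2}(f_1\circ f_1^{-1})\simeq f_2$, so by the homotopy invariance of the bracket \cite[Theorem 6.1]{OO3} the left side equals $\{f_n,\dots,f_3,f_2\circ\Sigma^{m_2}f_1,f_0\}$ and the right side equals $\{f_n,\dots,f_2,f_1\circ f_0\}$; this is exactly the inclusion $\supset$, upgrading part (1) to the asserted equality.

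The main obstacle is the bookkeeping in part (1): one must check that the transported data really forms an $\ddot{s}_t$-presentation---each $j_{r,s}$ still a free cofibration, each $a_{r,s}$ still a homotopy equivalence in the relevant under-category $\mathrm{TOP}^{C_{r,s}}$---and, more delicately, that re-grouping the factor $\s^{m_1}$ from the base level up to the $f_2$-level introduces no net permutation of suspension coordinates, so that no switching map $\tau$ survives in the conclusion. Here the fold number is preserved and each $f_k$ occupies the same level in both towers, which is what makes any switching maps introduced at intermediate stages cancel in the end (consistent with the switch-free special case Lemma~3.1). Verifying these compatibilities, by the same use of Lemma~2.1 and of (2.6) as in the proof of Theorem~1.1, is the technical heart of the argument.
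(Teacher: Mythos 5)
Your overall plan coincides with the paper's own proof: for (1) you transport a presentation of the left-hand sequence into one of the right-hand sequence starting from the map induced by $f_1$ on mapping cones (your $\theta$ is, up to homotopy, the paper's $e_{2,2}=(f_1\cup C1_{\Sigma^{m_{[1,0]}}X_0})\circ(\psi^{m_1}_{f_0})^{-1}$, via Lemma 2.1(5)), you define the new extensions by precomposition (the paper's $\overline{f^*_2}=\overline{f''_2}\circ\Sigma^{m_2}e_{2,2}$ and $\overline{f^*_r}=\overline{f''_r}\circ\Sigma^{m_r}e_{r,r}$), and you propagate comparison maps of the form $1_{X_{r+1}}\cup C\Sigma^{m_r}e_{r,r-1}$ up the towers; your proof of (2) is exactly the paper's (two applications of (1) sandwiched by homotopy invariance). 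Your worry about residual switching maps is in fact vacuous: the suspension coordinates $\s^{m_0},\s^{m_1},\s^{m_2},\dots$ occur in the same order in both towers, so in the paper's comparison diagrams the right-hand vertical maps are identities and no $\tau$ ever appears.

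There is, however, a genuine gap in your mechanism for (1): you call $\theta$ and its propagations ``equivalences'' and claim that the structures $\mathscr{A}_r$ are ``transported by precomposition'' with them. In (1) the map $f_1$ is arbitrary, so $\theta$ and the propagated maps $e_{r,s}:C^*_{r,s}\to C''_{r,s}$ are \emph{not} homotopy equivalences, and there is no map back from the old tower to the new one. A structure map for the new tower must be a homotopy equivalence $a^*_{r,s}:C^*_{r,s+1}\to C^*_{r,s}\cup_{g^*_{r,s}}C(\cdots)$ in $\mathrm{TOP}^{C^*_{r,s}}$; precomposing $a''_{r,s}$ with $e_{r,s+1}$ lands in the wrong cone, namely $C''_{r,s}\cup_{g''_{r,s}}C(\cdots)$, and no composite routed through the non-invertible $e$'s can be such an equivalence, so the check you flag as the ``technical heart'' would fail under your recipe. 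The paper resolves this in the opposite direction: the reduced structures $\mathscr{A}^*_r$ on the newly built towers are chosen \emph{arbitrarily} (they exist by \cite[Lemma 5.3]{OO2}), and the cone-level comparison maps are then manufactured from \emph{both} structures, $e_{r,r}=(a''_{r,r-1})^{-1}\circ\Phi(J^r)\circ a^*_{r,r-1}$, where $J^r:e_{r,r-1}\circ g^*_{r,r-1}\simeq g''_{r,r-1}$ is a chosen homotopy and $(a''_{r,r-1})^{-1}$ is a homotopy inverse in the under-category (legitimate by Dold's theorem, since structure maps are equivalences); only then is $\overline{f^*_r}=\overline{f''_r}\circ\Sigma^{m_r}e_{r,r}$ defined and the induction continued. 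This asymmetry---only the old structure maps are ever inverted, never the comparison maps---is precisely why (1) is an inclusion rather than an equality, and it is the idea missing from your sketch.
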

\begin{proof}
We use the following notations: 
\begin{gather*}
(X''_{n+1},\dots,X''_1)=(X_{n+1},\dots,X_2,X_0),\quad (m_n'',\dots,m_1'')=(m_n,\dots,m_2,m_1+m_0),\\
(f''_n,\dots,f''_1)=(f_n,\dots,f_2,f_1\circ\Sigma^{m_1}f_0),\\
(X^*_{n+1},\dots,X^*_1)=(X_{n+1},\dots,X_3,X_1,X_0),\quad (m_n^*,\dots,m_1^*)=(m_n,\dots,m_3,m_2+m_1,m_0),\\
(f^*_n,\dots,f^*_1)=(f_n,\dots,f_3,f_2\circ\Sigma^{m_2}f_1,f_0).
\end{gather*}

(1) We must show that $\{\vec{\bm f''}\}^{(\ddot{s}_t)}_{\vec{\bm m''}}\subset\{\vec{\bm f^*}\}^{(\ddot{s}_t)}_{\vec{\bm m^*}}$. 
Let $\alpha\in\{\vec{\bm f''}\}^{(\ddot{s}_t)}_{\vec{\bm m''}}$ and 
$\{\mathscr{S}_r'',\overline{f_r''},\mathscr{A}_r''\,|\,2\le r\le n\}$ an 
$\ddot{s}_t$-presentation of $\vec{\bm f''}$ with 
$\alpha=\overline{f''_n}\circ \widetilde{\Sigma}^{m_n}g''_{n,n-1}$. 
Set 
\begin{gather*}
\mathscr{S}_2^*=(\Sigma^{m^*_1}X^*_1;X^*_2,X^*_2\cup_{f^*_1}C\Sigma^{m^*_1}X^*_1;f^*_1;i_{f^*_1})=(\Sigma^{m_0}X_0;X_1, X_1\cup_{f_0}C\Sigma^{m_0}X_0;f_0;i_{f_0}),\\
e_{2,2}=(f_1\cup C1_{\Sigma^{m_{[1,0]}}X_0})\circ (\psi^{m_1}_{f_0})^{-1}:\Sigma^{m_1}C^*_{2,2}\to C''_{2,2},\\
\overline{f^*_2}=\overline{f_2''}\circ \Sigma^{m_2}e_{2,2}:\Sigma^{m^*_2}C^*_{2,2}\to X_3''=X_3^*=X_3,\quad 
\mathscr{A}^*_2=\{1_{C^*_{2,2}}\},\\
\mathscr{S}_3^*=(\widetilde{\Sigma}^{m_2^*}\mathscr{S}^*_2)(\overline{f^*_2},\widetilde{\Sigma}^{m_2^*}\mathscr{A}^*_2),\quad 
e_{3,1}=1_{X_3}:C^*_{3,1}=X_3^*\to X_3''=C''_{3,1},\\
 e_{3,2}=1_{X_3}\cup C\Sigma^{m_2}f_1:C^*_{3,2}=X_3\cup_{f_2\circ \Sigma^{m_2}f_1}C\Sigma^{m_{[2,1]}}X_1\to C''_{3,2}=X_3\cup_{f_2}C\Sigma^{m_2}X_2.
\end{gather*}
By definition the following diagram is commutative.
$$
\xymatrix{
C^*_{3,2} \ar[d]^-{e_{3,2}} & \Sigma^{m^*_2}C^*_{2,2}\cup C\Sigma^{m^*_2}X^*_2 \ar[d]^-{\Sigma^{m_2}e_{2,2}\cup C\Sigma^{m_2}f_1} \ar[l]_-{\overline{f^*_2}\cup C1} \ar[r]^-{\widetilde{\Sigma}^{m^*_2}\omega^*_{2,1}} & \Sigma\Sigma^{m_{[2,0]}}X_0 \ar@{=}[d]\\
C''_{3,2} & \Sigma^{m''_2}C''_{2,2}\cup C\Sigma^{m''_2}X''_2 \ar[l]_-{\overline{f''_2}\cup C1} \ar[r]^-{\widetilde{\Sigma}^{m''_2}\omega''_{2,1}} & \Sigma\Sigma^{m_{[2,0]}}X_0
}
$$
Hence $e_{3,2}\circ g^*_{3,2}\simeq g''_{3,2}$. 
Let $\mathscr{A}_3^*$ be an arbitrary reduced structure on $\mathscr{S}^*_3$. 

Let $n=3$. 
Set $\overline{f^*_3}=\overline{f''_3}\circ\Sigma^{m_3}e_{3,2}:\Sigma^{m_3}e_{3,2}:\Sigma^{m_3}C^*_{3,2}\to X^*_4=X_4$. 
We can easily see that $\{\mathscr{S}^*_r,\overline{f^*_r},\mathscr{A}^*_r\,|\,r=2,3\}$ 
is an $\ddot{s}_t$-presentation of $\vec{\bm f^*}$ and 
$\overline{f^*_3}\circ\widetilde{\Sigma}^{m_3^*}g^*_{3,2}\simeq \overline{f''_3}\circ \widetilde{\Sigma}^{m''_3}g''_{3,2}$. 
Hence $\alpha\in\{f_3^*,f_2^*,f_1^*\}^{(\ddot{s}_t)}_{(m^*_3,m_2^*,m_1^*)}$. 
This proves (1) for $n=3$. 

If $n\ge 4$, let $J^3:e_{3,2}\circ g^*_{3,2}\simeq g''_{3,2}$ be an any homotopy and set 
$\Phi(J^3)=\Phi(g^*_{3,2},g''_{3,2},1_{},e_{3,2};J^3):C_{g^*_{3,2}}\to C_{g''_{3,2}}$. 
By Lemma 2.1, we have the following homotopy commutative diagram. 
$$
\xymatrix{
&& C^*_{3,3} \ar[d]^-{a^*_{3,2}} \ar[r] & C^*_{3,3}\cup CC^*_{3,2} \ar[d]_-{a^*_{3,2}\cup C1} \ar[dr]^-{\omega^*_{3,2}} & \\
\Sigma\Sigma^{m_{[2,0]}}X_0 \ar@{=}[d] \ar[r]^-{g^*_{3,2}} & C^*_{3,2} \ar[d]_-{e_{3,2}} \ar[r] \ar[ur]^-{j^*_{3,2}} & C_{g^*_{3,2}} \ar[d]_-{\Phi(J^3)} \ar[r] & C_{g^*_{3,2}}\cup CC^*_{3,2} \ar[d]^-{\Phi(J^3)\cup Ce_{3,2}} \ar[r] & \Sigma^2 \Sigma^{m_{[2,0]}}X_0 \ar@{=}[d] \\
\Sigma\Sigma^{m_{[2,0]}}X_0 \ar[r]^-{g''_{3,2}} & C''_{3,2} \ar[r] \ar[dr]_-{j''_{3,2}} & C_{g''_{3,2}} \ar[d]^-{{a''_{3,2}}^{-1}} \ar[r] & C_{g''_{3,2}}\cup CC''_{3,2} \ar[d]_-{{a''_{3,2}}^{-1}\cup C1} \ar[r] & \Sigma^2 \Sigma^{m_{[2,0]}}X_0\\
&& C''_{3,3} \ar[r] & C''_{3,3}\cup CC''_{3,2} \ar[ur]_-{\omega''_{3,2}} &
}
$$
Set 
\begin{gather*}
e_{3,3}={a''_{3,2}}^{-1}\circ\Phi(J^3)\circ a^*_{3,2}:C^*_{3,3}\to C''_{3,3},\\ 
\overline{f^*_3}=\overline{f''_3}\circ \Sigma^{m_3}e_{3,3}:\Sigma^{m_3}C^*_{3,3}\to X_4,\\ 
\mathscr{S}^*_4=(\widetilde{\Sigma}^{m_3}\mathscr{S}^*_3)(\overline{f^*_3},\widetilde{\Sigma}^{m_3}\mathscr{A}^*_3),\\
e_{4,3}=1_{X_4}\cup C\Sigma^{m_3}e_{3,2}:C^*_{4,3}\to C''_{4,3}. 
\end{gather*}
Then the following diagram is homotopy commutative. 
$$
\xymatrix{
C^*_{4,3} \ar[d]^-{e_{4,3}} & \Sigma^{m_3}C^*_{3,3}\cup C\Sigma^{m_3}C^*_{3,2} \ar[d]^-{\Sigma^{m_3}e_{3,3}\cup C\Sigma^{m_3}e_{3,2}} \ar[l]_-{\overline{f^*_3}\cup C1} \ar[r]^-{\widetilde{\Sigma}^{m^*_3}\omega^*_{3,2}} & \Sigma^2\Sigma^{m_{[3,0]}}X_0 \ar@{=}[d]\\
C''_{4,3} & \Sigma^{m_3}C''_{3,3}\cup C\Sigma^{m_3}C''_{3,2} \ar[l]_-{\overline{f''_3}\cup C1} \ar[r]^-{\widetilde{\Sigma}^{m_3}\omega''_{3,2}} & \Sigma^2\Sigma^{m_{[3,0]}}X_0
}
$$
Hence $e_{4,3}\circ g^*_{4,3}\simeq g''_{4,3}$. 
Let $\mathscr{A}_4^*$ be an arbitrary reduced structure on $\mathscr{S}^*_4$. 

Let $n=4$. 
Set $\overline{f^*_4}=\overline{f''_4}\circ \Sigma^{m_4}e_{4,3}:\Sigma^{m_4}C^*_{4,3}\to X_5$. 
Then $\{\mathscr{S}^*_r,\overline{f^*_r},\mathscr{A}^*_r\,|\,r=2,3,4\}$ is an $\ddot{s}_t$-presentation of $\vec{\bm f^*}$ and
$\overline{f^*_4}\circ \widetilde{\Sigma}^{m_4}g^*_{4,3}\simeq \overline{f''_4}\circ\widetilde{\Sigma}^{m_4}g''_{4,3}$. 
This proves (1) for $n=4$. 

For $n\ge 5$, let $J^4:e_{4,3}\circ g^*_{4,3}\simeq g''_{4,3}$ be an any homotopy and set 
\begin{gather*}
e_{4,4}={a''_{4,3}}^{-1}\circ\Phi(g^*_{4,3},g''_{4,3},1,e_{4,3};J^4)\circ a^*_{4,3}:C^*_{4,4}\to C''_{4,4},\\
\overline{f^*_4}=\overline{f''_4}\circ\Sigma^{m_4}e_{4,4}:\Sigma^{m_4}C^*_{4,4}\to X_5,\\
\mathscr{S}^*_5=(\widetilde{\Sigma}^{m_4}\mathscr{S}^*_4)(\overline{f^*_4},\widetilde{\Sigma}^{m_4}\mathscr{A}^*_4),\\
e_{5,4}=1_{X_5}\cup C\Sigma^{m_4}e_{4,3}:C^*_{5,4}\to C''_{5,4}.
\end{gather*} 
Let $\mathscr{A}^*_5$ be an arbitrary reduced structure on $\mathscr{S}^*_5$. 
Proceeding with the process above, we have an $\ddot{s}_t$-presentation of $\vec{\bm f^*}$ with $\overline{f^*_n}\circ\widetilde{\Sigma}^{m_n}g^*_{n,n-1}\simeq \overline{f''_n}\circ\widetilde{\Sigma}^{m_n}g''_{n,n-1}$ so that (1) 
is proved for all $n\ge 5$. 
This completes the proof of (1). 

(2) Suppose that $m_1=0$ and $f_1:X_1\to X_2$ is a homotopy equivalence. 
Let $f^{-1}:X_2\to X_1$ be a homotopy inverse of $f_1$. 
We have
\begin{align*}
&\{f_n,\dots,f_3,f_2\circ\Sigma^{m_2}f_1,f_0\}^{(\ddot{s}_t)}_{(m_n,\dots,m_2,m_0)}\\
&=\{f_n,\dots,f_3,f_2\circ\Sigma^{m_2}f_1,f_1^{-1}\circ f_1\circ f_0\}^{(\ddot{s}_t)}_{(m_n,\dots,m_2,m_0)}\quad (\text{by \cite[Lemma 6.2]{OO3}})\\
&\subset 
\{f_n,\dots,f_3,f_2\circ\Sigma^{m_2}f_1\circ\Sigma^{m_2}f^{-1}_1,f_1\circ f_0\}^{(\ddot{s}_t)}_{(m_n,\dots,m_2,m_0)}\quad(\text{by (1)})\\
&=\{f_n,\dots,f_3,f_2,f_1\circ f_0\}^{(\ddot{s}_t)}_{(m_n,\dots,m_2,m_0)}\quad (\text{by \cite[Lemma 6.2]{OO3}})\\
&\subset \{f_n,\dots,f_3,f_2\circ\Sigma^{m_2}f_1,f_0\}^{(\ddot{s}_t)}_{(m_n,\dots,m_2,m_0)}
\quad(\text{by (1)}).
\end{align*}
Hence we obtain the desired equality in (2). 
\end{proof}

\section{Diagrams}
\subsection{\fbox{Diagram $D_{r+1}$}}
The diagram is commutative except for two squares marked with $(\#)$. 
The exceptional two squares are homotopy commutative. 
$$
\xymatrix{
& &&&\Sigma^{m_{r+1}}C_{r+1,r+1}\ar[d]^-{\widetilde{\Sigma}^{m_{r+1}}a_{r+1,r}} \ar@{-}[r]^-{i_{\Sigma^{m_{r+1}}j_{r+1,r}}} 
&\\
\Sigma^{r-1}\Sigma^{m_{[r+1,1]}}X_1 
\ar[rr]^-{\widetilde{\Sigma}^{m_{r+1}}g_{r+1,r}}
\ar[d]_-{\tau_{r+1}} &&\Sigma^{m_{r+1}}C_{r+1,r} 
\ar[rr]_-{i_{\widetilde{\Sigma}^{m_{r+1}}g_{r+1,r}}} \ar[urr]^-{\Sigma^{m_{r+1}}j_{r+1,r}}\ar@{=}[d] &&C_{\widetilde{\Sigma}^{m_{r+1}}g_{r+1,r}} \ar[d]^-{1\cup C\tau_{r+1}}
\ar@{-}[r]^-{i_i}
&\\
\ar@{}[rrd]|{(\#)}\Sigma^{m_{r+1}}\Sigma^{r-1}\Sigma^{m_{[r,1]}}X_1\ar[rr]^-{\Sigma^{m_{r+1}}g_{r+1,r}} \ar[d]_-{\tau'_{r+1}} &&\Sigma^{m_{r+1}}C_{r+1,r}\ar[rr]^-{i_{\Sigma^{m_{r+1}}g_{r+1,r}}} \ar@{=}[d]&&C_{\Sigma^{m_{r+1}}g_{r+1,r}}\ar[d]^-{\Phi(\widetilde{\Sigma}^{m_{r+1}}H^{r+1})}
\ar@{-}[r]^-{i_i}
&\\
\Sigma^{m^*_r}\Sigma^{r-2}\Sigma^{m^*_{[r-1,1]}}X^*_1\ar[rr]^-{\Sigma^{m^*_r}g^*_{r,r-1}} \ar[d]_-{\tau''_{r+1}} &&\Sigma^{m^*_r}C^*_{r,r-1}\ar[rr]^-{i_{\Sigma^{m^*_r}g^*_{r,r-1}}} \ar@{=}[d] &&C_{\Sigma^{m^*_r}g^*_{r,r-1}} \ar[d]^-{1\cup C\tau''_{r+1}} 
\ar@{-}[r]^-{i_i}
&\\
\Sigma^{r-2}\Sigma^{m^*_{[r,1]}}X^*_1\ar[rr]^-{\widetilde{\Sigma}^{m^*_r}g^*_{r,r-1}} &&\Sigma^{m^*_r}C^*_{r,r-1}\ar[rr]^-{i_{\widetilde{\Sigma}^{m^*_r}g^*_{r,r-1}}} \ar[rrd]_-{\Sigma^{m^*_r}j^*_{r,r-1}} &&C_{\widetilde{\Sigma}^{m^*_r}g^*_{r,r-1}}
\ar@{-}[r]^-{i_i}
&\\
& && &\Sigma^{m^*_r}C^*_{r,r} \ar[u]_-{\widetilde{\Sigma}^{m^*_r}a^*_{r,r-1}}
\ar@{-}[r]^-{i_{\Sigma^{m^*_r}j^*_{r,r-1}}}
&
}
$$
$$
\xymatrix{
&\ar[rr]^-{i_{\Sigma^{m_{r+1}}j_{r+1,r}}}&&C_{\Sigma^{m_{r+1}}j_{r+1,r}}\ar[d]_-{\widetilde{\Sigma}^{m_{r+1}}a_{r+1,r}\cup C1_{\Sigma^{m_{r+1}}C_{r+1,r}}} \ar[drr]^-{\widetilde{\Sigma}^{m_{r+1}}\omega_{r+1,r}} &&\\
&\ar[rr]^-{i_i}&&C_{i_{\widetilde{\Sigma}^{m_{r+1}}g_{r+1,r}}} \ar[rr]^-{q'_{\widetilde{\Sigma}^{m_{r+1}}g_{r+1,r}}} \ar[d]_-{(1\cup C\tau_{r+1})\cup C1_{\Sigma^{m_{r+1}}C_{r+1,r}}} &&\Sigma\Sigma^{r-1}\Sigma^{m_{[r+1,1]}}X_1\ar[d]^-{\Sigma\tau_{r+1}}\\
&\ar[rr]^-{i_i}&&\ar@{}[rrd]|{(\#)} C_{i_{\Sigma^{m_{r+1}}g_{r+1,r}}} \ar[rr]^-{q'_{\Sigma^{m_{r+1}}g_{r+1,r}}} \ar[d]_-{\Phi'(\widetilde{\Sigma}^{m_{r+1}}H^{r+1})}&&\Sigma\Sigma^{m_{r+1}}\Sigma^{r-1}\Sigma^{m_{[r,1]}}X_1\ar[d]^-{\Sigma\tau'_{r+1}}\\
&\ar[rr]^-{i_i}&&C_{i_{\Sigma^{m^*_r}g^*_{r,r-1}}}\ar[rr]^-{q'_{\Sigma^{m^*_r}g^*_{r,r-1}}} \ar[d]_-{(1\cup C\tau''_{r+1})\cup C1}&& \Sigma\Sigma^{m^*_r}\Sigma^{r-2}\Sigma^{m^*_{[r-1,1]}} X^*_1\ar[d]^-{\Sigma\tau''_{r+1}}\\
&\ar[rr]^-{i_i}&&C_{i_{\widetilde{\Sigma}^{m^*_r}g^*_{r,r-1}}} 
\ar[rr]^-{q'_{\widetilde{\Sigma}^{m^*_r}g^*_{r,r-1}}}&&\Sigma\Sigma^{r-2}\Sigma^{m^*_{[r,1]}}X^*_1\\
&\ar[rr]^-{i_{\Sigma^{m^*_r}j^*_{r,r-1}}}&&C_{\widetilde{\Sigma}^{m^*_r}j^*_{r,r-1}}\ar[u]^-{\widetilde{\Sigma}^{m^*_r}a^*_{r,r-1}\cup C1_{\Sigma^{m^*_r}C^*_{r,r-1}}} \ar[urr]_-{\widetilde{\Sigma}^{m^*_r}\omega^*_{r,r-1}} &&
}
$$
\subsection{\fbox{Diagram $D^*_5$}}
The diagram is commutative except for two squares marked with $(\#)$. 
The exceptional two squares are homotopy commutative. 
$$
\xymatrix{
& &\Sigma^{m^*_4}C^*_{4,4}\ar[d]^-{\widetilde{\Sigma}^{m^*_4}a^*_{4,3}} 
\ar@{-}[r]^-{i_{\Sigma^{m^*_4}j^*_{4,3}}} 
&\\
\Sigma^2\Sigma^{m^*_{[4,1]}}X^*_1\ar[r]^-{\widetilde{\Sigma}^{m^*_4}g^*_{4,3}} 
\ar[d]_-{\tau_5} &\Sigma^{m^*_4}C^*_{4,3}\ar[r]_-{i_{\widetilde{\Sigma}^{m_4^*}g^*_{4,3}}} \ar[ur]^-{\Sigma^{m^*_4}j^*_{4,3}}\ar@{=}[d] &\Sigma^{m^*_4}C^*_{4,3}\cup_{\widetilde{\Sigma}^{m^*_4}g^*_{4,3}}C\Sigma^2\Sigma^{m^*_{[4,1]}}X^*_1 
\ar[d]^-{1\cup C\tau_5}
\ar@{-}[r]^-{i_i}
&\\
\ar@{}[rd]|{(\#)} \Sigma^{m^*_4}\Sigma^2\Sigma^{m^*_3}X^*_1\ar[r]^-{\Sigma^{m^*_4}g^*_{4,3}} \ar[d]_-{\tau'_5} &\Sigma^{m^*_4}C^*_{4,3}\ar[r]^-i \ar@{=}[d]&\Sigma^{m^*_4}C^*_{4,3}\cup_{\Sigma^{m^*_4}g^*_{4,3}}C\Sigma^{m^*_4}\Sigma^2\Sigma^{m^*_3}X^*_1\ar[d]^-{\Phi(\widetilde{\Sigma}^{m_5}H^5)}
\ar@{-}[r]^-{i_i}
&\\
\Sigma^{m_5}\Sigma^3\Sigma^{m_{[4,1]}}X_1\ar[r]^-{\Sigma^{m_5}g_{5,4}} \ar[d]_-{\tau''_5} &\Sigma^{m_5}C_{5,4}\ar[r]^-i \ar@{=}[d] &\Sigma^{m_5}C_{5,4}\cup_{\Sigma^{m_5}g_{5,4}}C\Sigma^{m_5}\Sigma^3\Sigma^{m_{[4,1]}}X_1\ar[d]^-{1\cup C\tau''_5} 
\ar@{-}[r]^-{i_i}
&\\
\Sigma^3\Sigma^{m_{[5,1]}}X_1\ar[r]^-{\widetilde{\Sigma}^{m_5}g_{5,4}} &\Sigma^{m_5}C_{5,4}\ar[r]^-i \ar[rd]_-{\Sigma^{m_5}j_{5,4}} &\Sigma^{m_5}C_{5,4}\cup_{\widetilde{\Sigma}^{m_5}g_{5,4}}C\Sigma^3\Sigma^{m_{[5,1]}}X_1
\ar@{-}[r]^-{i_i}
&\\
& & \Sigma^{m_5}C_{5,5} \ar[u]_-{\widetilde{\Sigma}^{m_5}a_{5,4}}
\ar@{-}[r]^-{i_{\Sigma^{m_5}j_{5,4}}}
&
}
$$
$$
\xymatrix{
&\ar[r]^-{i_{\Sigma^{m^*_4}j^*_{4,3}}}&\Sigma^{m^*_4}C^*_{4,4}\cup C\Sigma^{m_4^*}C^*_{4,3}\ar[d]_-{\widetilde{\Sigma}^{m^*_4}a^*_{4,3}\cup C1_{\Sigma^{m^*_4}a^*_{4,3}}} \ar[dr]^-{\widetilde{\Sigma}^{m^*_4}\omega^*_{4,3}} &\\
&\ar[r]^-{i_i}&(\Sigma^{m^*_4}C^*_{4,3}\cup_{}C\Sigma^2\Sigma^{m^*_{[4,1]}}X^*_1)\cup C\Sigma^{m^*_4}C^*_{4,3} \ar[r]_-{q'_{\widetilde{\Sigma}^{m_4^*}g^*_{4,3}}} \ar[d]_-{(1\cup C\tau)\cup C1_{\Sigma^{m^*_4}C^*_{4,3}}} &\Sigma^3\Sigma^{m_{[5,4]}}\Sigma\Sigma^{m_{[3,1]}}X_1\ar[d]^-{\Sigma\tau_5}\\
&\ar[r]^-{i_i}&\ar@{}[rd]|{(\#)}(\Sigma^{m^*_4}C^*_{4,3}\cup C\Sigma^{m^*_4}\Sigma^2\Sigma^{m^*_3}X^*_1)\cup C\Sigma^{m^*_4}C^*_{4,3}\ar[r]^-{q'_{\Sigma^{m^*_4}g^*_{4,3}}} \ar[d]_-{\Phi'(\widetilde{\Sigma}^{m_5}H^5)}&\Sigma\Sigma^{m^*_4}\Sigma^2\Sigma^{m^*_3}X^*_1\ar[d]^-{\Sigma\tau'_5}\\
&\ar[r]^-{i_i}&(\Sigma^{m_5}C_{5,4}\cup_{}C\Sigma^{m_5}\Sigma^3\Sigma^{m_{[4,1]}}X_1)\cup C\Sigma^{m^*_4}C^*_{4,3}\ar[r]^-{q'_{\Sigma^{m_5}g_{5,4}}} \ar[d]_-{(1\cup C\tau''_5)\cup C1}& \Sigma\Sigma^{m_5}\Sigma^3\Sigma^{m_{[4,1]}}X_1\ar[d]^-{\Sigma\tau''_5}\\
&\ar[r]^-{i_i}&(\Sigma^{m_5}C_{5,4}\cup_{}C\Sigma^3\Sigma^{m_{[5,1]}}X_1)\cup C\Sigma^{m_5}C_{5,4}
\ar[r]^-{q'_{\widetilde{\Sigma}^{m_5}g_{5,4}}}&\Sigma^4\Sigma^{m_{[5,1]}}X_1\\
&\ar[r]^-{i_{\Sigma^{m_5}j_{5,4}}}&\Sigma^{m_5}C_{5,4}\cup C\Sigma^{m_5}C_{5,4}\ar[u]^-{\widetilde{\Sigma}^{m_5}a_{5,4}\cup C1_{\Sigma^{m_5}C_{5,4}}} \ar[ur]_-{\widetilde{\Sigma}^{m_5}\omega_{5,4}} &
}
$$

\section{Corrections of errors in \cite{OO1,OO2}}
\subsection{Typos in \cite{OO1}}
We should remove $\eta_5$ from Proposition 7.5 which are on pages 14 and 70. 
``Proposition 8.1'' in Remark 8.3 on page 72 should be ``Proposition 8.2''.

\subsection{Corrections of errors in \cite{OO2}}
The first paragraph of Section 4 of \cite{OO2} should be changed as follows: 
In this section we will work in $\mathrm{TOP}^w$. 
Hence $i_f:Y\to Y\cup_f CX$ is always a free cofibration for every map $f:X\to Y$ by Corollary 2.3(2),(3). 

In this connection, we must make four alterations (a)-(d) for \cite{OO2}: (a) Two words ``a cofibration" in Definition 4.1 and Lemma 4.2 should be replaced by ``a free cofibration". 
(b) We should remove ``If $j$ is a free cofibration, then" from Lemma 4.3(5). 
(c) In the proof of Lemma 4.3(1), we should change ``Since $j$ is a cofibration by the assumption, there exists a homotopy" into ``Since $j$ is a pointed cofibration, there exists a pointed homotopy". 
(d) In the proof of Lemma 4.3(5), we should change ``Suppose 
that $j$ is a free cofibration. Then $\Sigma^\ell j$'' into ``Since $j$ is a free cofibration, $\Sigma^\ell j$''. 
\end{appendix}

\end{document}